\newtheorem{theorem}{Theorem}
\newtheorem{corollary}{Corollary}[section]
\newtheorem{lemma}[corollary]{Lemma}
\newtheorem{proposition}[corollary]{Proposition}
\newcommand{\Prob} {{\mathbb P}}
\newcommand{\R}{{\mathbb{R}}}
\newcommand{\C}{{\mathbb C}}
\newcommand{\dist}{{\rm dist}}
\def \Re {{\rm Re}}
\def \p {\partial}
\def \Half {{\mathbb H}}
\def \Disk {{\mathbb D}}
\def \diam {{\rm diam}}
\def \F  {{\mathcal F}}
\def \G  {{\mathcal G}}
\def \rect {{\mathcal R}}
\def \arcs {{\mathcal A}}
\def \Heuristic {\noindent {$\clubsuit$}}
\def \linehere { {\hrule}}
\def \mtwo {{\medskip \medskip}}
\def \ms {{\medskip}}
\def \labove { \mtwo \linehere \ms   }
\def \lbelow {{\ms \linehere \mtwo}}
\newenvironment{definition}[1][Definition]{\begin{trivlist}
\item[\hskip \labelsep {\bfseries #1}]}{\end{trivlist}}
\begin{document}

\title[Continuity of radial and two-sided radial $SLE$]
{Continuity of radial and two-sided radial $SLE_\kappa$ at the terminal
point}

\author{Gregory F. Lawler}
\address{Department of Mathematics\\
University of Chicago }
\email{lawler@math.uchicago.edu}
\thanks{Research supported by National
 Science Foundation grant DMS-0907143.}

%\author{Gregory F. Lawler\thanks{Research supported by National
%Science Foundation grant DMS-0907143.}\\
%Joan Lind\\Steffen Rohde}
 
\maketitle

\begin{abstract}  
We prove that  radial  $SLE_\kappa$  and two-sided radial
$SLE_\kappa$ are   
continuous at their terminal point.
\end{abstract}

\section{Introduction}

We  answer a question posed  
by Dapeng Zhan about  radial Schramm-Loewner
evolution ($SLE_\kappa$) and discuss a similar
question about two-sided $SLE_\kappa$ that arose
in work of the author with Brent Werness \cite{LBrent}.
  Radial $SLE_\kappa$ was invented by
 Oded Schramm \cite{Schramm} and 
is a one-parameter family of random curves 
\[ \gamma: [0,\infty) \rightarrow \overline \Disk, \;\;\;\;
  \gamma(0) \in \p \Disk, \] 
where $\Disk$ denotes the unit disk.
  The definition implies that
$\gamma(t) \neq 0$ for every $t$ and
\[          \liminf_{t \rightarrow \infty} |\gamma(t)|
    = 0 . \]
Zhan asked for a proof that with probability one
\begin{equation}  \label{may8.1}
       \lim_{t \rightarrow \infty} \gamma(t) = 0 . 
\end{equation}
For $\kappa > 4$, for which the $SLE$ paths intersect themselves,
this is not difficult to prove because the path makes closed
loops about the origin.  The harder case is $\kappa \leq 4$.
Here we establish \eqref{may8.1} for $\kappa \leq 4$ by proving
a stronger result.

To state the result, let
\[  \Disk_n = e^{-n} \Disk = \left \{z \in \C: |z| < e^{-n} \right\}, \]
\[      \rho_n = \inf\left\{t: |\gamma(t)| = e^{-n} 
 \right\}, \]
and let $\G_n$ denote the $\sigma$-algebra generated by
$\{\gamma(s): 0 \leq s \leq \rho_n\}$.  We fix
\[  \alpha =  \frac 8 \kappa -1 ,\]
which is positive for $\kappa < 8$.

\begin{theorem} \label{radmain}
 For every $0 < \kappa < 8$, there
exists $c > 0$ such that if $\gamma$ is radial $SLE_\kappa$ from
$1$ to $0$ in $\Disk$ and $j,k,n$ are positive integers,
then
\begin{equation}  \label{apr4.100}
  \Prob\left\{\gamma[\rho_{n+k},\infty) \subset \Disk_j
 \mid \G_{n+k}\right\} \geq [1- c \, e^{-n\alpha/2}] \, 
  \, 1\{\gamma[\rho_k,\rho_{n+k}] \subset \Disk_j \}. 
  \end{equation}
Moreover,   if $0 < \kappa \leq 4$, then
\begin{equation}  \label{jul20.1} 
 \Prob\left\{\gamma[\rho_{n+k},\infty) \subset \Disk_k
 \mid \G_{n+k}\right\} \geq 1- c \, e^{-n\alpha/2}. 
 \end{equation}
 \end{theorem}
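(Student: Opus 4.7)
The plan is to reduce both inequalities to a one-point boundary estimate for radial $SLE_\kappa$ via the conformal Markov property. Let $g_t\colon \Disk \setminus \gamma[0,t] \to \Disk$ denote the radial Loewner map fixing the origin with $g_t'(0) = e^t$. By the Markov property of SLE, conditionally on $\G_{n+k}$ the process $s \mapsto g_{\rho_{n+k}}(\gamma(\rho_{n+k}+s))$ is a fresh radial $SLE_\kappa$ in $\Disk$ targeting $0$, started from the boundary point $U := g_{\rho_{n+k}}(\gamma(\rho_{n+k}))$. The event $\{\gamma[\rho_{n+k},\infty) \not\subset \Disk_j\}$ corresponds to the event that this new radial SLE hits the image set $K := g_{\rho_{n+k}}(\partial \Disk_j \cap (\Disk \setminus \gamma[0,\rho_{n+k}]))$ before reaching $0$.

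For (\ref{apr4.100}), the indicator hypothesis supplies the geometric input. On the event $\{\gamma[\rho_k, \rho_{n+k}] \subset \Disk_j\}$, the slit $\gamma[\rho_k, \rho_{n+k}]$ is a crosscut in the annular region $\Disk_j \setminus \overline{\Disk_{n+k}}$ that accumulates radial capacity $\rho_{n+k} - \rho_k = n + O(1)$ (using Koebe's $1/4$-theorem to control $\rho_n$ in terms of $n$). Beurling-type extremal-length arguments then show that the portion of $K$ facing the driving point $U$ lies inside a neighborhood of $\partial \Disk$ of harmonic-measure size at most $C e^{-n/2}$. Finally I would invoke the standard boundary one-point estimate for radial SLE: radial $SLE_\kappa$ hits a closed set contained in an $\epsilon$-neighborhood (in harmonic measure) of $\partial \Disk$ with probability at most $C \epsilon^{\alpha}$, where $\alpha = 8/\kappa - 1$. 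This estimate is proved by optional stopping applied to a local martingale of the form $\sin(\theta_t/2)^\alpha$, with $\theta_t$ the angular gap between the driving process and the image under $g_t$ of a fixed boundary prime end of $K$. Combining these ingredients yields the bound $C e^{-n\alpha/2}$ on the probability that the continuation hits $K$.

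For (\ref{jul20.1}) with $\kappa \leq 4$, the SLE is a simple curve. The strategy is to reduce to (\ref{apr4.100}) with $j = k$ by arguing that, on $\G_k$, the event $\{\gamma[\rho_k, \rho_{n+k}] \subset \Disk_k\}$ fails with conditional probability at most $C e^{-n\alpha/2}$. Intuitively, a simple SLE path that has reached radius $e^{-k}$ cannot re-emerge through a macroscopic arc of $\partial \Disk_k$ without the same boundary exponent appearing: one replays the conformal reduction after $\rho_k$ in place of $\rho_{n+k}$, and the fact that $\gamma$ has since then reached a point of modulus $e^{-(n+k)}$ again supplies a crosscut of radial capacity of order $n$ inside the relevant annulus. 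A slightly different (and possibly cleaner) formulation is to run the conformal reduction directly with $\Disk_k$ in place of $\Disk_j$, exploiting simplicity to force the portion of $\gamma[0,\rho_{n+k}]$ inside $\Disk_k$ to contain such a crosscut unconditionally.

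The main obstacle is the extremal-length / Beurling estimate converting ``crosscut of radial capacity $n$ inside $\Disk_j \setminus \Disk_{n+k}$'' into ``$K$ lies within harmonic-measure distance $C e^{-n/2}$ of $\partial \Disk$''. This requires simultaneous control of the conformal distortion of $g_{\rho_{n+k}}$ near the origin (via Koebe) and near the tip (via Loewner-chain derivative estimates), uniformly in the hull geometry outside $\Disk_j$. Once this geometric input is in hand, the subsequent boundary one-point estimate for radial SLE is a standard martingale calculation.
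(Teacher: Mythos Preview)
Your outline has the right global shape, but the two key steps are either not as stated or are precisely the hard content of the paper. The ``standard boundary one-point estimate for radial $SLE$'' you invoke is Proposition~\ref{groundhog.12}, which is the main new technical result here; the function $(\sin(\theta_t/2))^\alpha$ is \emph{not} a local martingale under radial $SLE_\kappa$ (it picks up a compensator from the drift $a\cot X_t$), so optional stopping alone does not yield the bound. The paper instead proves it by comparing radial to chordal $SLE_\kappa$ via Girsanov up to a carefully chosen stopping time, importing the known chordal estimate \eqref{chordest}, and iterating (Lemma~\ref{groundhog.1}). Separately, your geometric reduction for \eqref{apr4.100} is at the wrong radius: the image $K=g_{\rho_{n+k}}(\partial\Disk_j\cap H_{n+k})$ need not have small harmonic measure, since on the indicator event $\gamma[0,\rho_k]$ may leave $\partial\Disk_j$ essentially untouched. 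The paper works at level $k$, not $j$: the topological Lemma~\ref{jul19.lemma1} shows that on the indicator event there is a \emph{single} crosscut $l\subset\partial\Disk_k$ through which any escape to $\partial\Disk_j$ must pass, and Lemma~\ref{deterlemma} then controls the image of that one crosscut well enough to feed into Proposition~\ref{groundhog.12}.

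Your reduction of \eqref{jul20.1} to \eqref{apr4.100} with $j=k$ is vacuous: $\gamma(\rho_k)\in\partial\Disk_k$, so the indicator $1\{\gamma[\rho_k,\rho_{n+k}]\subset\Disk_k\}$ is identically zero, and the alternative you sketch (showing the closed-disk version holds with high probability) is essentially the statement \eqref{jul20.1} itself. The paper proceeds differently: it decomposes $\partial\Disk_k\cap H_{n+k}$ into \emph{all} of its crosscuts $\eta$, bounds the probability of hitting each one by $c\,e^{-n\alpha/2}(\theta_{2,\eta}-\theta_{1,\eta})^\alpha$ via Lemma~\ref{deterlemma} and Proposition~\ref{groundhog.12}, and sums. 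The sum is controlled only because $\alpha\ge 1$ when $\kappa\le 4$, through the elementary inequality $\sum_\eta(\theta_{2,\eta}-\theta_{1,\eta})^\alpha\le\bigl(\sum_\eta(\theta_{2,\eta}-\theta_{1,\eta})\bigr)^\alpha\le(2\pi)^\alpha$; this summability, rather than simplicity of the curve, is where the restriction $\kappa\le 4$ actually enters.
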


%Then, we prove 
%if $\gamma$ is radial $SLE_\kappa$ from $1$ to $0$ in the
%$\Disk$ with
%$\kappa \leq 4$, then there exists $c$ such that for all $k,n$,
%\begin{equation}  \label{jul20.1}
%     \Prob\left\{ \gamma[\rho_{n+k},\infty) \subset \Disk_{k} 
%        \mid \G_{n+k} \right\} \geq 1 - c \, e^{-n\alpha/2}, 
%\end{equation}
%where
%\[          \alpha = \frac 8 \kappa - 1 . \]
%Continuity at the origin follows easily using the Borel-Cantelli
%Lemma.  For $4 < \kappa < 8$, we do not know if the estimate
%\eqref{jul20.1} holds.  However, in this case we establish a weaker
%estimate.  If $j < k$, then
%\begin{equation}  \label{jul20.2}
%  \Prob\left\{ \gamma[\rho_{n+k},\infty) \subset \Disk_{j} 
%        \mid \G_{n+k} \right\} \geq \left[1- c \, e^{-n\alpha/2}\right]
% \, 1\left\{\gamma[\rho_{k},\rho_{n+k}]  \subset \Disk_j \right\}.
%\end{equation}
%The estimate \eqref{jul20.2} can be used to give another proof of
%continuity at the origin for $4 < \kappa < 8$.

There is another version of $SLE$, sometimes called
two-sided radial $SLE_\kappa$ which corresponds to
chordal $SLE_\kappa$ conditioned to go through an
interior point.  We consider the case of chordal $SLE_\kappa$
in $\Disk$ from $1$ to $-1$ conditioned to go
through the origin stopped when it reaches
the origin (see Section \ref{twosec}
 for precise definitions).

\begin{theorem} \label{twomain}
 For every $0 < \kappa < 8$, there
exists $c > 0$ such that if $\gamma$ is two-sided
radial $SLE_\kappa$ from
$1$ to $-1$ through
$0$ in $\Disk$ and $j,k,n$ are positive integers,
then
\begin{equation}  \label{apr4.1}
   \Prob\left\{\gamma[\rho_{n+k},\infty) \subset \Disk_j
 \mid \G_{n+k}\right\} \geq [1- c \, e^{-n\alpha/2}] \, 
  \, 1\{\gamma[\rho_k,\rho_{n+k}] \subset \Disk_j \}. 
  \end{equation}
\end{theorem}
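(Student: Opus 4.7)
The plan is to deduce Theorem~\ref{twomain} from Theorem~\ref{radmain} by an absolute continuity (Girsanov) argument, since prior to hitting $0$, two-sided radial $SLE_\kappa$ from $1$ to $-1$ through $0$ is a local Radon--Nikodym perturbation of radial $SLE_\kappa$ from $1$ to $0$ in $\Disk$. Writing $\mu$ for the radial law and $\nu$ for the two-sided radial law, on $\G_t$ (for $t$ less than the hitting time of $0$) one has $d\nu/d\mu = M_t/M_0$, where $M_t$ is a positive $\mu$-martingale built from the $SLE_\kappa$ partition function in the slit domain $\Disk\setminus \gamma[0,t]$ for the additional marked point $-1$. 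The first step is to record the explicit formula for $M_t$, which follows from an It\^o computation comparing the radial and two-sided radial driving functions and expresses $M_t$ in terms of conformal data at the tip $\gamma(t)$ and at $-1$.

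Setting $A_m = \{\gamma[\rho_{n+k}, \rho_m] \subset \Disk_j\}$ (a $\G_m$-measurable event), the change-of-measure identity gives, for $m>n+k$,
\[ 1 - \Prob_\nu[A_m \mid \G_{n+k}] = \frac{1}{M_{\rho_{n+k}}}\,\E_\mu\!\left[M_{\rho_m}\,1_{A_m^c}\,\big|\,\G_{n+k}\right]. \]
Provided one can bound the ratio $M_{\rho_m}/M_{\rho_{n+k}}$ by a constant $C$ on a sufficiently large event, the right-hand side is controlled by $C\,\Prob_\mu[A_m^c \mid \G_{n+k}]$, to which Theorem~\ref{radmain} via \eqref{apr4.100} applies and yields the bound $Cc\,e^{-n\alpha/2}$ on $\{\gamma[\rho_k,\rho_{n+k}] \subset \Disk_j\}$. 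A monotone limit $m \to \infty$ then gives Theorem~\ref{twomain}.

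The main obstacle is the control of $M_{\rho_m}/M_{\rho_{n+k}}$, which is delicate because the partition function in $M_t$ blows up if the slit approaches $-1$, whereas $A_m^c$ a priori allows the path to wander arbitrarily close to $-1$. My approach would be to localise by further stopping at the first entry into a small neighbourhood of $-1$: on the unstopped event, conformal distortion estimates together with the explicit Poisson kernel in $\Disk$ yield a uniform bound on the ratio, since the tip lies near $0$ while $-1$ remains at macroscopic distance from the slit. The contribution from paths that approach $-1$ before reaching $0$ must be shown separately to be negligible, via an $SLE$ Beurling-type estimate combined with a quantified growth rate of the partition function. Once this localisation is in place, the transfer from Theorem~\ref{radmain} to Theorem~\ref{twomain} becomes routine.
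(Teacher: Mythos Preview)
Your route is genuinely different from the paper's, and it has a real gap. The paper does \emph{not} deduce Theorem~\ref{twomain} from Theorem~\ref{radmain} by absolute continuity; it works directly under the two-sided radial law. By Lemma~\ref{jul19.lemma1}, on the event $\{\gamma[\rho_k,\rho_{n+k}]\subset\Disk_j\}$ the curve can exit $\Disk_j$ after time $\rho_{n+k}$ only by first hitting the distinguished crosscut $l^*=l^*_{n+k,k}$, so it suffices to prove Proposition~\ref{may10.prop1}. That proposition is obtained from two ingredients that make no reference to the radial measure: the deterministic Lemma~\ref{may18.lemma1}, which says that if $\gamma$ hits $l^*$ at a time $\psi\in[\rho_{n+k},\rho_{n+k+1}]$ then $S_\psi\le c\,e^{-n/2}S_{\rho_{n+k}}$; and the radial Bessel estimate Lemma~\ref{cinco.lemma} applied at $\beta=2a$ (the two-sided radial value), giving $\Prob\{\min_{[0,t_0]}S_t\le\epsilon S_0\}\le c\,\epsilon^{4a-1}=c\,\epsilon^{\alpha}$. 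One then iterates over unit steps $[\rho_{n+k+i},\rho_{n+k+i+1}]$ and sums a geometric series.

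The step that fails in your sketch is the claimed uniform bound on $M_{\rho_m}/M_{\rho_{n+k}}$ on the event that the curve stays away from $-1$. In the paper's notation the Radon--Nikodym derivative of two-sided radial with respect to radial is, up to a constant,
\[
N_t \;=\; S_t^{\,a}\,e^{3a^2 t/2}\,|g_t'(-1)|^{(3a-1)/2},
\]
and this is \emph{not} bounded on paths that merely avoid a neighbourhood of $-1$. For the straight path from $1$ toward $0$ along the positive real axis one has $S_t\equiv 1$, $|g_t'(-1)|=e^{-at}$, and $\rho_m\sim m/(2a)$, so $N_{\rho_m}\asymp e^{m/4}\to\infty$. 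Thus ``conformal distortion estimates \dots\ yield a uniform bound on the ratio, since the tip lies near $0$ while $-1$ remains at macroscopic distance'' is false: the exponential-in-time factor is not killed by keeping $-1$ at macroscopic distance, and your monotone limit $m\to\infty$ breaks down. Salvaging the Girsanov route would require showing that $N_{\rho_m}$ is \emph{small} on $A_m^c$, which in effect needs the same information---that reaching $l^*$ forces $S_t$ to drop---that the paper uses directly via Lemma~\ref{may18.lemma1}.
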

 
Using these theorem, we are able to obtain the following
corollary.  Unfortunately, we are not able to estimate
the exponent $u$ that appears.

\begin{theorem} \label{maincorollary}
 For every $0 < \kappa < 8$, there exist
 $c < \infty, u > 0$ such that the following
 holds.  Suppose
$\gamma$ is either radial $SLE_\kappa$ from $1$ to $0$
in $\Disk$ or two-sided radial $SLE_\kappa$ from $1$ to $-1$
through $0$ stopped when it reaches the origin. 
Then, for all nonnegative integers $k,n$, 
\begin{equation}  \label{apr6.1}
   \Prob\{\gamma[\rho_{n+k},\infty) \cap \p \Disk_k
\neq \emptyset \mid \G_k \} \leq c \, e^{-un}, 
\end{equation}
and hence
\begin{equation}  \label{apr5.1}
 \Prob\{\gamma[\rho_{n+k},\infty) \cap \p \Disk_k
\neq \emptyset \} \leq c \, e^{-un}. 
\end{equation}
In particular, if $\gamma$ has the radial parametrization, then
with probability one,
\[         \lim_{t \rightarrow \infty} \gamma(t) = 0 . \]
\end{theorem}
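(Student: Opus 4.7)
The almost-sure limit is a Borel-Cantelli consequence of \eqref{apr5.1}: for each fixed $k$ the tail bounds $\Prob(\gamma[\rho_{n+k},\infty)\cap\p\Disk_k\neq\emptyset)\leq ce^{-un}$ are summable in $n$, so almost surely there is a random $n=n(k)$ past which $\gamma[\rho_{n+k},\infty)\subset\Disk_k$, forcing $\limsup|\gamma(t)|\leq e^{-k}$; intersecting over $k$ gives $\gamma(t)\to 0$. The unconditional \eqref{apr5.1} follows from \eqref{apr6.1} by taking the expectation of the deterministic upper bound. Hence the entire content is \eqref{apr6.1}. When $\gamma$ is radial $SLE_\kappa$ with $\kappa\leq 4$, the strong estimate \eqref{jul20.1} has no indicator factor, and the tower property with respect to $\G_{n+k}$ immediately yields \eqref{apr6.1} with $u=\alpha/2$.

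In the remaining cases --- radial with $4<\kappa<8$ and two-sided radial for all $0<\kappa<8$ --- only the weaker \eqref{apr4.100}/\eqref{apr4.1} are available, which carry an indicator $1\{\gamma[\rho_{k'},\rho_{n'+k'}]\subset\Disk_j\}$. The natural move is to apply these with $j=k$ and the shift $(k',n')=(k+m,n-m)$ for an integer $0\leq m<n$, so that $\rho_{n'+k'}=\rho_{n+k}$ and the indicator becomes $1\{\gamma[\rho_{k+m},\rho_{n+k}]\subset\Disk_k\}$. Conditioning on $\G_k$ and using the tower property yields, for every such $m$,
\[
\Prob(\gamma[\rho_{n+k},\infty)\cap\p\Disk_k\neq\emptyset\mid\G_k)\leq ce^{-(n-m)\alpha/2}+\Prob(\gamma[\rho_{k+m},\rho_{n+k}]\not\subset\Disk_k\mid\G_k).
\]
The second term is the \emph{unwinding} probability that, after first descending to radius $e^{-(k+m)}$, the path climbs back to radius $e^{-k}$ before further descending to radius $e^{-(n+k)}$.

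The decisive step, and what I expect to be the main obstacle, is to bound this unwinding probability by $c'e^{-u'm}$, uniformly in $k$, $n\geq m$, and in $\G_k$. I would do so in two moves. First, establish a uniform one-scale bound: there exist $\theta<1$ and $N_0\in\Z_{+}$ with
\[
\Prob(\gamma[\rho_{k+N_0},\infty)\not\subset\Disk_k\mid\G_k)\leq\theta
\]
for every $k$ and almost every realization of $\G_k$. Such a bound should follow from the qualitative recurrence $\liminf|\gamma(t)|=0$ together with a compactness/uniformity argument on the conformally uniformized post-$\rho_k$ domain, exploiting that the future of (two-sided) radial $SLE_\kappa$ is, up to rotation, the same process on the uniformized disk. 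Second, iterate this bound along the successive first-return times to $\p\Disk_{k+jN_0}$ for $j=0,1,\ldots,\lfloor m/N_0\rfloor$, using the domain Markov property and Koebe distortion estimates to control how the Euclidean disks $\Disk_{k+jN_0}$ transform under the intermediate uniformizing maps; this produces geometric decay $c'\theta^{\lfloor m/N_0\rfloor}$. Taking $m=\lfloor n/2\rfloor$ in the display then gives \eqref{apr6.1} with $u=\min(\alpha/4,\log(1/\theta)/(2N_0))>0$. The reason the theorem only asserts the existence of some $u>0$ is precisely that the constant $\theta$ in the one-scale bound is not explicit.
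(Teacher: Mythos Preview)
Your overall decomposition matches the paper's: reduce via \eqref{apr4.100}/\eqref{apr4.1} to bounding the ``unwinding'' probability $\Prob(\gamma[\rho_{k+m},\rho_{n+k}]\not\subset\Disk_k\mid\G_k)$, then balance $m$ against $n-m$. The gap is in how you propose to bound that unwinding probability.

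First, the justification for the one-scale bound is inadequate. The qualitative fact $\liminf_{t}|\gamma(t)|=0$ gives no control on return probabilities: a priori the path could oscillate between small radii and $\p\Disk_k$ infinitely often, making the return probability equal to $1$. For two-sided radial there is no compactness either, since the post-$\rho_k$ law depends on the angle $X_{\rho_k}\in(0,\pi)$. The paper obtains the analogue of your one-scale bound (its display \eqref{dec9.2}) not from soft arguments but by combining a concrete event on the driving function with the quantitative estimate \eqref{apr4.0} itself.

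Second, and more seriously, the iteration does not produce geometric decay. The events $A_j=\{\gamma[\rho_{k+(j+1)N_0},\infty)\not\subset\Disk_{k+jN_0}\}$ are tail events, not measurable with respect to any $\G_{k+j'N_0}$, so you cannot peel them off along the filtration. They are also highly correlated: if the path climbs all the way from $\p\Disk_{k+m}$ to $\p\Disk_k$, then \emph{every} $A_j$ occurs simultaneously, so $\Prob(\bigcap_j A_j)$ can equal $\Prob(A_{J-1})\le\theta$ rather than anything like $\theta^{J}$. Switching to the actual return times to $\p\Disk_{k+jN_0}$ does not help either, because your one-scale bound is stated only at the first-passage time $\rho_{k+jN_0}$, and a return time has a different (and uncontrolled) conditional configuration.

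The paper circumvents this by tracking the $\G_{n+k}$-measurable integer $L_n=\max\{j:\gamma[\rho_{n+k-j},\rho_{n+k}]\subset\Disk_k\}$. Applying \eqref{apr4.1} with parameters $(j,k',n')=(k,\,n+k-L_n,\,L_n)$ gives $\Prob(L_{n+1}\ge L_n+1\mid\G_{n+k})\ge 1-c'e^{-L_n\alpha/2}$, so $(L_n)$ stochastically dominates a Markov chain on $\Z_{\ge 0}$ with $p(j,0)\le c'e^{-j\alpha/2}$; a short renewal lemma (Lemma~\ref{aprillemma}) then yields $\Prob(L_n<n/2\mid\G_k)\le C'e^{-\delta n}$ for some inexplicit $\delta>0$. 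This is exactly the unwinding bound you need. The point is that the Markov-chain device uses \eqref{apr4.1} at \emph{every} scale and works with finite-horizon, adapted events, which is what makes the induction go through.
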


Note that  \eqref{apr6.1}  is not as strong a result as \eqref{jul20.1}.  At
the moment, we do not have uniform bounds for
\[  \Prob\{\gamma[\rho_{n+k},\infty) \cap \p \Disk_k
\neq \emptyset \mid \G_{n+k} \} \]
for radial $SLE_\kappa$ with $4 < \kappa < 8$ or two-sided radial
$SLE_\kappa$ for $0 < \kappa < 8$.

\labove \textsf
{\Heuristic  \begin{small}  
There is another, perhaps easier, way of obtaining \eqref{apr6.1}
for radial $SLE_\kappa$, $4 < \kappa < 8$, by using the fact
that the curve hits itself (and hence also forms closed loops
about origin).  This approach, however, does not work for $\kappa \leq 4$
or for two-sided radial for $\kappa < 8$ since in these cases
the origin is not separated from $\p \Disk$ in finite time.
 \end{small}}
\lbelow

\subsection{Outline of the paper}

When studying $SLE$, one uses many kinds of estimates: results
for all conformal maps; results that hold for solutions of
the (deterministic) Loewner differential
equation; results about
stochastic differential equations (SDE), often simple equations
of one variable; and finally results that combine them all.
We have
separated the non-$SLE$ results into
a  ``preliminary'' section with subsections emphasizing the
different aspects. 

We discuss three kinds of $SLE_\kappa$: radial, chordal,
and two-sided radial.  They are probability measures on curves 
(modulo reparametrization) in simply connected
domains   connecting, respectively:
boundary point to interior point, two distinct boundary points, and
two distinct boundary points conditioned to go through an interior
point.  In all three cases, the measures are conformally invariant
and hence we can choose any convenient domain.  For the radial
equation, the unit disk $\Disk$ is most convenient and for this
one gets the Loewner equation as originally studied by Loewner. For this
equation a {\em radial parametrization} is used which depends on the
interior point.  For the
chordal case, Schramm \cite{Schramm} showed that the half-plane
with boundary points $0$ and $\infty$ was most convenient, and the
corresponding Loewner equation is probably the easiest for studying
fine properties. Here a {\em chordal parametrization} depending
on the target boundary point (infinity) is most convenient.
  The two-sided radial, which was 
introduced in \cite{LLind,LPark} and can be considered as a type of
$SLE(\kappa,\rho)$ process as defined in \cite{LSWrest}, 
has both an interior point and
a boundary point.  If one is studying this path up to the time
it reaches the interior point, which is all that we do in this paper, 
then one can use either the radial or the chordal parametrization.

The three kinds of $SLE_\kappa$, considered as measures on curves
modulo reparametrization,  are locally absolutely continuous
with respect to each other.  To make this precise, it is
easiest if one studies them simultaneously in a single domain with
a single choice of parametrization.  We do this here choosing the
radial parametrization in the unit disk $\Disk$.  We review the radial
Loewner equation in Section \ref{radsec}.  We write the equation slightly
differently than in \cite{Schramm}.  First, we add a parameter
$a$ that gives a linear time change.  We also write a point on the
unit circle as $e^{2i\theta}$ rather than $e^{i\theta}$; this makes 
the SDEs slightly easier and also shows the relationship between this
quantity and the argument of a point in the chordal case.  Indeed,
if $F$ is a conformal transformation of the unit disk to the
upper half plane with $F(1) = 0$ and $F(e^{2i\theta}) = \infty$, then
$\sin[\arg F(0)] = \sin \theta$.

The radial Loewner equation describes the evolution of a curve
$\gamma$ from $1$ to $0$ in $\Disk$. More precisely, if $D_t$
denotes the connected component of $\Disk \setminus \gamma(0,t]$
containing the origin, and $g_t: D_t \rightarrow \Disk$ is
the conformal transformation with $g_t(0) = 0, g_t'(0) > 0,$
then the equation describes the evolution of $g_t$.  At time
$t$, the relevant information is $g_t(\gamma(t))$ which we
write as $e^{2iU_t}$.  To compare radial $SLE_\kappa$ to chordal
or two-sided radial $SLE_\kappa$ with target boundary point
$w = e^{2i\theta}$, we also need to keep track of $g_t(w)
$ which we write as $ e^{2i\theta_t}$.

Radial $SLE_\kappa$ is obtained by solving the Loewner equation
with $a=2/\kappa$ and $U_t = - B_t$ a standard Brownian motion.  If
$X_t = \theta_t - U_t$, then $X_t$ satisfies
\[          dX_t = \beta \, \cot X_t \, dt + dB_t, \]
with $\beta = a$.
Much of the study of $SLE_\kappa$ in the radial parametrization
can be done by considering the SDE above.  In fact, the three
versions of $SLE_\kappa$ can be obtained by choosing different
$\beta$.  In Section
\ref{radbessec} we discuss the properties of this SDE
that we will need.  
We use the Girsanov theorem to estimate the Radon-Nikodym derivative
of the measures on paths for different values of $\beta$.

Section \ref{detersec} gives estimates for conformal maps that
will be needed.  The first two subsections discuss crosscuts
and the argument of a point.  If $D$ is a simply connected
subdomain of $\Disk$ containing the origin, then the intersection
of $D$ with the circle $\p \Disk_k$ can contain many components.
We discuss such crosscuts in Section \ref{crosscut} and state
a simple topological fact, Lemma \ref{jul19.lemma1},
 that is used in the proofs of
\eqref{apr4.100} and \eqref{apr4.1}.

 A classical conformally invariant measure
of distance between boundary arcs is extremal distance or
extremal length.  We will only need to consider distance between
arcs in a conformal rectangle for which it is useful
to estimate   
  harmonic measure, that is,   hitting
probabilities for Brownian motion. 
  We discuss the general strategy
for proving such estimates in Section \ref{extremesec}. 
The following subsections give specific estimates that will
be needed for radial and  two-sided radial.
The results in this section do not depend much at all on
the Loewner equation --- one fact that is used is that we are
stopping a curve at the first time it reaches  $\p \Disk_n$
for some $n$.
   The Beurling estimate (see \cite[Section 3.8]{LBook})
is the major tool for getting uniform estimates.

The main results of this paper can be found in Section \ref{slesec}.
The first three subsections define the three types of $SLE_\kappa$,
radial, chordal, two-sided radial, in terms of radial.  (To be more
precise, it defines these processes up to the time the path separates
the origin from the boundary point $w$).  Section \ref{lemmasec}
contains the hardest new result in this paper.  It is an analogue for 
to radial  $SLE_\kappa$ of a known estimate
for chordal $SLE_\kappa$ on the probability of hitting a set near
the boundary.  This is the main technical estimate for Theorem \ref{radmain}.
A different estimate is  proved in Section \ref{tworadsec} 
for two-sided radial.   The final section finishes
the proof Theorem \ref{maincorollary} by using a known technique
to show exponential rates of convergence. 

% Using this the proof of Theorem \ref{radmain}
%is not difficult. The last  subsection uses a different argument to
%estabilsh Theorem \ref{twomain}, and we finish by giving
%the simple argument for Theorem \eqref{maincorollary}
% 
I would like to thank Dapeng Zhan for bringing up the fact that
this result is not in the literature and Joan Lind and Steffen Rohde
for useful
 conversations.

\subsection{Notation}

We let
\[ \Disk = \{|z| < 1\}, \;\;\;\;
        \Disk_n = e^{-n} \, \Disk = 
\{|z| < e^{-n}\}. \]
If $\gamma$ is a curve, then
\[   \rho_{n} = \inf\{t: \gamma(t) \in \p \Disk_n \}. \]
If $\gamma$ is random, then $\F_t$ denotes the $\sigma$-algebra
generated by $\{\gamma(s): s \leq t \}$ and $\G_n =
\F_{\rho_n}$ is the $\sigma$-algebra generated by
$\{\gamma(t): t \leq \rho_n\}. $ Let $D_t$ be the connected
component of $\Disk \setminus \gamma(0,t]$ containing
the origin and
\[             H_n = D_{\rho_n}. \]

If $D$ is a domain, $z \in D$, $V \subset \p D$, we let
$h_D(z,V)$ denote the harmonic measure starting at $z$, that
is, the probability that a Brownian motion starting
at $z$ exits $D$ at $V$.

When discussing $SLE_\kappa$ we will fix $\kappa$ and assume
that $0 < \kappa < 8$.  We let
\[   a = \frac 2 \kappa , \;\;\;\;\;  \alpha = \frac 8 \kappa -1
         = 4a -1 > 0 . \]

\section{Preliminaries}

\subsection{Radial Loewner equation}  \label{radsec}

\begin{comment}
In this section we will derive facts about Loewner curves, that is
curves that arise from solving the Loewner equation.  We will
use the radial Loewner equation in the $\Disk$.  We will start the
curves at $1 \in \p \Disk$.  We fix another boundary point $w
= e^{2i\theta_0}, 0 < \theta_0 < \pi$.  The ``target'' point for
our curves will either be the interior point $0$ or the boundary
point $w$.
\end{comment}

Here we review the radial Loewner differential equation;
see \cite{LBook} for more details.
The radial Loewner equation describes the evolution 
of a curve from $1$ to $0$ in the unit disk $\Disk$.
Let $a > 0$,
and let $U_t:[0,\infty) \rightarrow \R$ be a continuous
function with $U_0=0$.  Let $g_t$ be the solution to the
initial value problem
\begin{equation}  \label{radloew}
   \p_t g_t(z) = 2a \, {g_t(z)}\, \frac{ e^{2iU_t}+ g_t(z)}
 { e^{2iU_t}- g_t(z)} 
\begin{comment}
= 2a \, g_t(z)  \, \left[1 + \frac{2 g_t(z)}
   {e^{2iU_t}-g_t(z)}\right]
\end{comment}
 , 
\;\;\;\;  g_0(z) = z . 
\end{equation}
For each $z \in \overline \Disk \setminus \{1\}$, the solution
of this equation exists up to a time $T_z \in (0,\infty]$.
Note that $T_0 = \infty$
and $g_t(0) = 0$ for all $t$.  For each $t \geq 0$,  $D_t$,
as defined above,  equals  $\{z \in \Disk: T_z > t\}$, and $g_t$ is
the unique conformal transformation of  $D_t$
 onto $\Disk$
with $g_t(0) = 0, g_t'(0) > 0$. By differentiating
\eqref{radloew} with respect to $z$, we see that $\p_tg_t'(0) =
2a g_t'(0)$ which implies that $g_t'(0) = e^{2at}$.

If we define   $h_t(z)$  to be the continuous function of $t$ such
that
\[             g_t(e^{2iz}) = \exp\left\{2i h_t(z) \right\}, \;\;\;\; h_0(z) = z , \]
then the Loewner equation becomes
\begin{equation}  \label{cotloew}
    \p_t h_t(z) = a \, \cot(h_t(z) - U_t)  , \;\;\;\;
         h_0(z) = z . 
\end{equation}
We will consider this primarily for real $z = x \in (0,\pi)$. 
Note that if $x \in (0,\pi)$ and $D_t$ agrees with $\Disk$
in a neighborhood of  $e^{2ix}$, then
\begin{equation}  \label{jul13.1}
  |g_t'(e^{2ix})| = h'_t(x) . 
\end{equation}

The radial equation can also be used to study curves whose
``target'' point is a boundary point $w = e^{2i\theta_0},
0 < \theta_0 < \pi$. If we let $\theta_t = h_t(\theta_0)$,
 then 
  \eqref{cotloew} becomes
\[          \p_t \theta_t = a \, \cot(\theta_t - U_t) , \]
which is valid for $t < T_w$.  Using \eqref{jul13.1}, we get
\[   |g_t'(w)| = h_t'(\theta_0) = 
            \exp \left\{-a\int_0^t \frac{ds}{\sin^2(\theta_s -
             U_s)} \right\}.\]

\labove \textsf
{\Heuristic  \begin{small}  
 The radial Loewner equation as in \cite{Schramm}
or \cite{LBook} is usually written with $a=1/2$.  Also, the
$2U_t$ in the exponent in \eqref{radloew} is usually written
as $U_t$.  We choose to write $2U_t$ so that the equation
\eqref{cotloew} is simpler, and because 
$\theta_t - U_t$ corresponds to an
 angle when we map the  disk to the upper half
plane, see Section \ref{argsec}.
% When we define
%$SLE_\kappa$ we will choose $U_t$ to be
%a standard Brownian motion and use $a=2/\kappa$ as the parameter.
%It is more standard to fix $a=1/2$ and choose 
%$U^*_t = 2U_t$ to be a Brownian
%motion with variance $\kappa$. 
 \end{small}}
\lbelow

\begin{comment}
 Differentiating \eqref{cotloew} with
respect to $z$ gives
\[    \p_t h_t'(z) = -\frac{a \, h_t'(z)}
{\sin^2(h_t(z) - U_t)}, \;\;\;\;
              h_t'(z) = \exp\left\{-a\int_0^t
   \frac{ds}{\sin^2(h_s(z) - U_s)}\right\}. \]
 Let $D_t =
\{z: T_z > t\}$. 
 
 Then the following is the basic fact;
see \cite[Theorem 4.14]{LBook} for a proof.

\begin{proposition}  For each $t \geq 0$, $g_t$ is
the unique conformal transformation of $D_t$ onto $\Disk$
with $g_t(0) = 0, g_t'(0) > 0$.  In fact, $g_t'(0)
 = e^{2at}$.
\end{proposition}
\end{comment}

We say that $g_t$ is generated by   $\gamma$
if $\gamma:[0,\infty) \rightarrow \overline \Disk$ is
a curve such that for each $t$, $D_t$ is the connected
component of $\Disk \setminus \gamma(0,t]$ containing
the origin.  Not every continuous $U_t$ yields conformal
maps $g_t$ generated by a curve, but with probability
one $SLE_\kappa$  is generated by a curve (see \cite{RS}
for a proof for $\kappa \neq 8$ which is all that we need
in this paper).  For
ease, we will restrict our discussion to $g_t$ that are
generated by curves.

\begin{definition} $\;$

\begin{itemize}
\item  A curve arising from the Loewner equation
will be called a {\em Loewner curve}.  Two
such curves are equivalent if one is obtained from the other
by increasing reparametrization.
\item A Loewner curve has the $a$-radial parametrization
if  
$g_t'(0) = e^{2at}$.
\end{itemize}
\end{definition}

Recall that $\rho_n = \inf\{t: |\gamma(t)| = e^{-n}\}$.  A simple
conseqence of the Koebe $1/4$-theorem is the existence of
$c < \infty$ such that for all $n$
\begin{equation}  \label{newer}
   \rho_{n+1} \leq \rho_n + c . 
   \end{equation}

\subsection{Radial Bessel equation}   \label{radbessec}

Analysis of radial $SLE$ leads to studying
 a simple one-dimensional SDE \eqref{radbessel}
 that
we call the {\em radial Bessel equation}.  This
equation can be obtained using the Girsanov
theorem  by ``weighting'' or ``tilting'' a standard Brownian motion
as we now describe.
Suppose $X_t$ is a standard one-dimensional
Brownian motion defined on a probability
space $(\Omega,\Prob)$  with $0 < X_0
 < \pi$ and let $\tau = \inf\{t:  \sin X_t=
 0 \}.$  Roughly speaking, the radial Bessel equation
with parameter $\beta$ (up to time $\tau$)
is obtained by weighting the
Brownian motion locally by $(\sin X_t)^\beta$.  
 Since $(\sin X_t)^\beta$
  is not a local martingale, we need to compensate it by
a $C^1$ (in time) process $e^{\Phi_t}$ such that
$e^{-\Phi_t} \, (\sin X_t)^\beta$ is a local martingale.
The appropriate compensator is found easily using
It\^o's formula; indeed,   
\[   M_t =  M_{t,\beta} = (\sin X_t)^\beta \, e^{\beta^2t/2}
  \, \exp\left\{\frac{(1-\beta)\beta}2\int_0^t \frac{ds}
  {\sin^2 X_s} \right\}, \;\;\;\; 0 \leq t < \tau , \]
is a local martingale satisfying
\begin{equation}  \label{jan30.1}
         dM_{t} = \beta \, M_{t} \, \cot X_t \, dX_t. 
\end{equation}
In particular, for every $\epsilon > 0$ and
$t_0 < \infty$, there exists $C =   C(\beta,\epsilon,t_0) < \infty$
such that
 if $\tau_\epsilon =
 \inf\{t: \sin X_t \leq \epsilon \}$, then
\[    C^{-1}  \leq M_t \leq C
, \;\;\;\; 0 \leq t \leq 
  t_0 \wedge \tau_\epsilon.\]

Let $\Prob_\beta$
 denote the probability measure
on paths $X_t, 0 \leq t < \tau$ such that
for each $\epsilon > 0, t_0 < \infty$,
the measure $\Prob_\beta$ on paths $X_t,
0 \leq t \leq t_0 \wedge \tau_\epsilon$
is given by
\[         d\Prob_\beta = \frac{M_{t_0 \wedge \tau_\epsilon
   }}{M_0} \, d\Prob.\]
The Girsanov theorem states that
\[    B_t = B_{t,\beta} := X_t - \beta\int_0^t \cot X_s \, ds,
\;\;\;\; 0 \leq t < \tau \]
is a standard Brownian motion with respect to the measure
$\Prob_{\beta}$.  In other words,  
\[
   dX_t = \beta \, \cot X_t \,dt + dB_{t},
\;\;\;\;  0 \leq t < \tau .
\]
We call this the radial Bessel equation (with parameter
$\beta$).
 By comparison with the usual
Bessel equation, we can see that
\[    \Prob_{\beta}\{\tau = \infty \} = 1 \mbox{ if and only
if } \beta \geq \frac 12.\]

\labove \textsf
{\Heuristic  \begin{small}  
Although the measure $\Prob_\beta$
is mutually absolutely continuous with respect to $\Prob$ when one
restricts to curves $X_t, 0 \leq t \leq \tau_\epsilon \wedge t_0$,
it is possible that the measure $\Prob_\beta$
on curves $X_t, 0 \leq t <\tau \wedge t_0$ has a singular part
with respect to $\Prob$.
 \end{small}}
\lbelow

\subsubsection{An estimate}

Here we establish an estimate \eqref{may5.1}
for the radial Bessel equation
which we will use in the proof of continuity of two-sided
radial $SLE$.
Suppose that $X_t$ satisfies
\begin{equation}  \label{radbessel}
dX_t = \beta \, \cot X_t \,dt + dB_{t}, \;\;\; 0 \leq t < \tau , 
\end{equation}
where $\beta \in \R$, $B_t$ is a standard Brownian motion,
 and $\tau = \inf\{t:  \sin X_t = 0\}. $
Let
\[  F(x) = F_\beta(x) = \int_x^{\pi/2} (\sin t)^{-2\beta} \, dt ,
\;\;\;\; 0 < x < \pi,  \]
which satisfies
\begin{equation}  \label{may11.1}
  F''(x) + 2\beta \, (\cot x) \, F'(x) = 0.
\end{equation}
 
%\begin{lemma}  If $X_t$ satisfies \eqref{radbessel}, then
%$F(X_t), 0 \leq t < \tau$ is a local martingale.
%\end{lemma}
%
%\begin{proof}  This is immediate from \eqref{may11.1} and It\^o's
%formula.
%\end{proof}

\begin{lemma} For every $\beta > 1/2$, there exists $c_\beta
< \infty$ such that if $0 < \epsilon < x  \leq \pi/2$, 
$X_t$ satisfies \eqref{radbessel} with $X_0 = x$, and 
\[           \tau_\epsilon
 = \inf\{t\geq 0: X_t = \epsilon \mbox{ or }
  \pi/2 \}, \]
then
\[ \Prob\{X_{\tau_\epsilon} = \epsilon\}
         \leq c_\beta \, (\epsilon/x)^{2\beta-1}. \] 
\end{lemma}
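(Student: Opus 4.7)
The plan is to compute a martingale for the process $X_t$ by solving the backward Kolmogorov ODE, apply optional stopping, and then estimate the resulting scale function. The ODE \eqref{may11.1} is precisely the condition that makes $F(X_t)$ a local martingale: by It\^o's formula,
\[
  dF(X_t) = F'(X_t)\, dB_t + \left[\beta\cot(X_t)\,F'(X_t) + \tfrac12 F''(X_t)\right] dt
          = F'(X_t)\, dB_t .
\]
Since $F$ is continuous and nonnegative on $[\epsilon,\pi/2]$, it is bounded there, so the stopped process $F(X_{t\wedge\tau_\epsilon})$ is a bounded martingale. One needs to check that $\tau_\epsilon < \infty$ almost surely; on $[\epsilon,\pi/2]$ the drift $\beta\cot X_t$ is bounded (by $\beta\cot\epsilon$) and the diffusion coefficient is one, so $X_t$ is dominated by Brownian motion with bounded drift on a bounded interval, which exits in finite time.

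Applying optional stopping, and using $F(\pi/2)=0$,
\[
   F(x) = \E[F(X_{\tau_\epsilon})] = F(\epsilon)\, \Prob\{X_{\tau_\epsilon}=\epsilon\},
\]
so
\[
   \Prob\{X_{\tau_\epsilon}=\epsilon\} = \frac{F(x)}{F(\epsilon)} .
\]
It then remains to estimate this ratio. Since $\tfrac{2}{\pi}t \leq \sin t \leq t$ on $(0,\pi/2]$, we have $(\sin t)^{-2\beta}\asymp t^{-2\beta}$, and because $2\beta-1>0$,
\[
   F(y) \;=\; \int_y^{\pi/2} (\sin t)^{-2\beta}\, dt
   \;\asymp\; \int_y^{\pi/2} t^{-2\beta}\, dt
   \;=\; \frac{1}{2\beta-1}\bigl[y^{1-2\beta} - (\pi/2)^{1-2\beta}\bigr],
\]
with implicit constants depending only on $\beta$.

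For $y$ small (say $y \leq \pi/4$) the first term dominates and $F(y) \asymp y^{1-2\beta}$, giving directly $F(x)/F(\epsilon) \leq c_\beta\,(\epsilon/x)^{2\beta-1}$. For $x$ in the remaining range $[\pi/4,\pi/2]$, $F(x)$ is bounded above by a constant depending on $\beta$ while $F(\epsilon) \geq c_\beta'\,\epsilon^{1-2\beta}$, so $F(x)/F(\epsilon) \leq c_\beta''\,\epsilon^{2\beta-1} \leq c_\beta'''\,(\epsilon/x)^{2\beta-1}$ since $x$ is bounded. Patching the two ranges finishes the proof. The only mild subtlety is the lower-order correction $(\pi/2)^{1-2\beta}$ when $x$ is close to $\pi/2$, but it cannot hurt because $F(\pi/2)=0$ forces the probability to vanish as $x\uparrow\pi/2$; it is handled simply by the uniform upper bound on $F(x)$ in the second range.
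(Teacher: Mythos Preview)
Your proof is correct and follows essentially the same approach as the paper's: apply It\^o's formula with the scale function $F$ satisfying \eqref{may11.1}, use optional stopping to get $\Prob\{X_{\tau_\epsilon}=\epsilon\}=F(x)/F(\epsilon)$, and then use the asymptotic $F(y)\asymp y^{1-2\beta}$ for $\beta>1/2$. The paper's version is terser (it simply states $F(\epsilon)\sim (2\beta-1)^{-1}\epsilon^{1-2\beta}$ and says the lemma follows), while you spell out the case analysis and the finiteness of $\tau_\epsilon$; one small remark is that in your second range you should also note that if $\epsilon\in[\pi/4,\pi/2)$ the bound is trivial since $F(x)/F(\epsilon)\leq 1$ and $(\epsilon/x)^{2\beta-1}$ is bounded below, but this is a routine detail.
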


\begin{proof}  It\^o's formula and \eqref{may11.1}
show that  $F(X_{t \wedge \tau_\epsilon})$ is a
bounded
martingale, and hence  the optional sampling theorem implies that
\[   F(x) =  \Prob\{X_{\tau_\epsilon} = \epsilon\} \,
    F(\epsilon) + \Prob\{X_{\tau_\epsilon} = \pi/2\} \,
    F(\pi/2) = 
 \Prob\{X_{\tau_\epsilon} = \epsilon\} \,
    F(\epsilon) .\]
Therefore,
\[    \Prob\{X_{\tau_\epsilon}  = \epsilon\} = 
  \frac{F(x)}{F(\epsilon)}. \]
If $\beta > 1/2$,  then
\[    F(\epsilon)  \sim \frac{1}{2\beta - 1} \, \epsilon^{1-2\beta},
\;\;\;\;  \epsilon \rightarrow 0+,\]
from which the lemma follows.
\end{proof}

\begin{lemma}  \label{cinco.lemma}
 For every $\beta > 1/2, t_0 < \infty$,  there exists $c=
 c_{\beta,t_0}
< \infty$ such that if 
$X_t$ satisfies \eqref{radbessel} with $X_0 \in (0,\pi)$, then
\begin{equation} \label{may5.1}
 \Prob\left \{\min_{0 \leq t \leq t_0}
\sin X_t \leq \epsilon \sin X_0 \right\} \leq c \, \epsilon^{2\beta - 1}. 
\end{equation}
\end{lemma}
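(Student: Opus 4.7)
The plan is to construct a positive eigenfunction of the generator of the radial Bessel SDE that blows up at both endpoints of $(0,\pi)$, producing an exponential martingale that controls the joint time–space event $\{\tau \leq t_0\}$, where $\tau := \inf\{t : \sin X_t \leq \epsilon \sin X_0\}$. The function $F$ from the preceding lemma satisfies $\frac{1}{2}F'' + \beta\cot(x) F' = 0$ but stays finite at $\pi/2$, so it cannot by itself bound escapes toward $\pi$; moreover a pure martingale in $X_t$ alone would give $\Prob\{\tau<\infty\} = 1$ with no dependence on $t_0$. So I want an $e^{-\lambda t} G(X_t)$ with $G$ symmetric about $\pi/2$ and diverging at both $0$ and $\pi$.

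The natural candidate is $G(x) := (\sin x)^{1-2\beta}$. Writing $\mathcal{L} = \beta\cot(x)\,\partial_x + \frac{1}{2}\partial_x^2$, a direct computation gives $\mathcal{L}G = \frac{2\beta-1}{2}\,G$, so by It\^o's formula
\[
 N_t \;:=\; e^{-(2\beta-1)t/2}\,(\sin X_t)^{1-2\beta}
\]
is a local martingale. Because $\beta > 1/2$ we have $1-2\beta < 0$, so on the random interval $[0,\tau\wedge t_0]$ the bound $\sin X_t \geq \epsilon \sin X_0$ yields $N_t \leq (\epsilon \sin X_0)^{1-2\beta}$; hence $N_{t\wedge\tau}$ is bounded, and therefore a true martingale on $[0,t_0]$.

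Optional sampling at $\tau \wedge t_0$ then gives
\[
 (\sin X_0)^{1-2\beta} \;=\; \E\bigl[N_{\tau \wedge t_0}\bigr] \;\geq\; e^{-(2\beta-1)t_0/2}\,(\epsilon \sin X_0)^{1-2\beta}\;\Prob\{\tau \leq t_0\},
\]
where I used $N \geq 0$, the bound $e^{-(2\beta-1)\tau/2} \geq e^{-(2\beta-1)t_0/2}$ on $\{\tau \leq t_0\}$, and the fact that by continuity $\sin X_\tau = \epsilon \sin X_0$ on that event (which uses $\epsilon < 1$; the case $\epsilon \geq 1$ is trivial). Rearranging yields $\Prob\{\tau \leq t_0\} \leq e^{(2\beta-1)t_0/2}\,\epsilon^{2\beta-1}$, the required bound with $c_{\beta,t_0} = e^{(2\beta-1)t_0/2}$.

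The main obstacle is guessing the right function $G$; once one writes down $(\sin x)^{1-2\beta}$ and notices its symmetry about $\pi/2$, the remaining computation and optional-stopping argument are mechanical. The hypothesis $\beta > 1/2$ enters in exactly one place, namely in making $G$ bounded on $\{\sin x \geq \epsilon \sin X_0\}$ so that $N_{t\wedge \tau}$ is a true martingale; a supermartingale argument via Fatou would suffice in any case, so no deeper input is required.
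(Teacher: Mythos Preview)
Your proof is correct and is genuinely different from the paper's argument. The paper proceeds by a renewal-type iteration: starting from $X_0$ with $r = \sin X_0 \leq 1/2$, it waits until $\sin X_t$ hits either $\epsilon r$ or $1$; the previous lemma (via the scale function $F(x)=\int_x^{\pi/2}(\sin t)^{-2\beta}\,dt$) bounds the probability of the former by $c\,\epsilon^{2\beta-1}$, while after hitting $\pi/2$ the process has a uniformly positive chance of staying in $[\pi/4,3\pi/4]$ for the remaining time. This yields an inequality $q \leq c\,\epsilon^{2\beta-1} + (1-c_1)\,q$, which is then solved for $q$.

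Your approach instead identifies $(\sin x)^{1-2\beta}$ as an eigenfunction of the generator with eigenvalue $(2\beta-1)/2$, so that $N_t = e^{-(2\beta-1)t/2}(\sin X_t)^{1-2\beta}$ is a local martingale, and then applies optional stopping in one shot. This is shorter, bypasses the previous lemma entirely, handles the two boundary points $0$ and $\pi$ simultaneously by the symmetry of $\sin$, and produces the explicit constant $c_{\beta,t_0}=e^{(2\beta-1)t_0/2}$. The paper's route, by contrast, keeps the argument at the level of hitting probabilities and reuses machinery already in place, at the cost of an implicit constant coming from the renewal step.
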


\begin{proof}  We allow constants to depend on $\beta,t_0$.  Let $r = \sin
X_0$.
  It suffices to prove the result when $r \leq 1/2$.  Let
$\sigma = \inf\{t: \sin X_t = 1 \mbox{ or } \epsilon  r\}$ and
let $\rho = \inf\{t > \sigma: \sin X_t = r\}. $  Using the
previous lemma we see that 
\[    \Prob\{\sin X_\sigma = \epsilon r\} \leq c \,
   \epsilon^{2 \beta - 1} . \]
Since $r \leq 1/2$ and there is positive probability that the process
started at $\pi/2$ stays in $[\pi/4, 3\pi/4]$ up to time $t_0$,
we can see 
  that 
\[           \Prob\{\rho > t_0 \mid  \sin X_\sigma = 1\} \geq c_1,
\]
Hence, if $q$ denotes the probability on the left-hand side  of \eqref{may5.1}, 
we get
\[   q \leq c \,  \epsilon^{2 \beta - 1} + \left(1 -
  {c_1} \right) \, q. \]
\end{proof}

\subsection{Deterministic lemmas}  \label{detersec}

 \subsubsection{Crosscuts in $\p \Disk_k$}$\;$ \label{crosscut}

\begin{definition}  A {\em crosscut} of a domain
$D$ is the image of a simple curve $\eta:(0,1) \rightarrow
D$ with $\eta(0+),\eta(1-) \in \p D$.  
\end{definition}

   Recall that  $H_n$ is the connected component of $\Disk \setminus
\gamma(0,\rho_n]$ containing the origin. Let
\[   \p_n^0 = \p H_n \setminus \gamma[0,\rho_n], \]
which is either empty or is an open subarc of $\p \Disk$.

 For each $0 < k < n$,
let $V_{n,k}$ denote the connected component of $H_n \cap \Disk_k$
that contains the origin, and let $\p_{n,k}=\p V_{n,k}
 \cap H_n$.     The connected components of
$\p_{n,k}$ comprise a collection $\arcs_{n,k} $
of open subarcs of $\p \Disk_k$.  Each arc
$l \in \arcs_{n,k} $ is a crosscut
of $H_n$ such that  $H_n \setminus l$ has two connected components.
Let $V_{n,k,l}$ denote the component of   $H_n \setminus l$ 
that does not contain
the origin; note that these components are disjoint
for distinct $l \in  \arcs_{n,k} $.
If $   \p_n^0 \neq \emptyset$, there is a unique arc 
 $l^*= l^*_{n,k} \in
\arcs_{n,k}$ such that $\p_n^0 \subset   \p V_{n,k,l^*}$.
%Let
%\[    \psi_{n,k,l} = \inf\{t > \rho_n: \gamma(t) 
%\in \overline {l}\},\;\;\;\;
%    \psi_{n,k} = \psi_{n,k,l^*}.\]

\labove \textsf
{\Heuristic  \begin{small}  
Note that each $l \in \arcs_{n,k}$ is a connected component
of $\p \Disk _k \cap H_n$; however, there may be components
of $\p \Disk_k \cap H_n$ that are not  in
$\arcs_{n,k}$. In particular,
it is possible that $  V_{n,k,l} \cap \Disk_k \neq \emptyset.$
The arc $l^*$ is the unique arc in $\arcs_{n,k}$ such that
each path from $0$ to $\p_n^0$ in $H_n$ must pass through
$l^*$.  One can construct examples where there are other 
components $l$ of
$\p \Disk_k \cap H_n$ with the property that every
  path from $0$ to $\p_n^0$ in $H_n$ must pass through
$l$.  However, these components are not in $\arcs_{n,k}$.
 \end{small}}
\lbelow

If $k < n$ and $\gamma[\rho_n,\infty) \cap \p \Disk_k \neq \emptyset$,
then the first visit to $\p \Disk_k$ after  time $\rho_n$ must be to the
closure of the one of the crosscuts in ${\mathcal A}_{n,k}$. 
In this paper we will estimate the probability of hitting a given
crosscut.  Since there can be many crosscuts, it is not immediate how
to use this estimate to bound the probability of hitting any crosscut.
This is the technical issue that prevents us from extending \eqref{jul20.1}
to all $\kappa < 8$. 
The next lemma, however, shows that if the curve has not returned
to $\p \Disk_j$ after time $\rho_k$, then there is only one crosscut
in ${\mathcal A}_{n,k}$ from which one can access $\p \Disk_j$.

\begin{lemma} \label{jul19.lemma1}
Suppose $j < k < n$ and $\gamma$ is a Loewner
curve in $\Disk$ starting at $1$ with $\rho_n < \infty$, 
$H_n \not\subset \Disk_j$, and $
         \gamma[\rho_k,\rho_n] \subset \Disk_j .$
Then
there exists a unique crosscut $l \in {\mathcal A}_{n,k}$ such that
if $\eta:[0,1)\rightarrow H_n \cap \Disk_j$ is a simple curve
with $\eta(0) = 0,\eta(1-) \in \p \Disk_j$ and
\[           s_0 = \inf\{s: \eta(s) \in \p \Disk_{k}\}, \]
then $\eta(s_0) \in l$.  If $\p^0_n \neq \emptyset$, then  $l = l^*_{n,k}$.
\end{lemma}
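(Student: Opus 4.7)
The plan is to split the proof into three parts: existence of an exit crosscut for each admissible $\eta$, uniqueness of this crosscut across all such $\eta$, and identification with $l^*_{n,k}$ when $\p_n^0 \neq \eset$.

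For the first part I would observe that $\eta([0,s_0))$ is a connected subset of $H_n \cap \Disk_k$ containing the origin, so it lies in $V_{n,k}$; hence $\eta(s_0) \in \overline{V_{n,k}} \cap H_n \cap \p\Disk_k$, and since $V_{n,k} \cap \p\Disk_k = \eset$, the point $\eta(s_0)$ must lie on $\p V_{n,k} \cap H_n = \bigcup_{l \in \arcs_{n,k}} l$. This gives a unique $l_\eta \in \arcs_{n,k}$ with $\eta(s_0) \in l_\eta$.

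The core of the argument is uniqueness. Let $W$ be the connected component of $H_n \cap \Disk_j$ containing $0$. Since $H_n$ and $\Disk_j$ are both simply connected planar domains, $W$ is simply connected, and continuity forces $\eta([0,1)) \subset W$; the hypothesis $H_n \not\subset \Disk_j$ ensures $\p W \cap \p\Disk_j \neq \eset$. Each $l \in \arcs_{n,k}$ is a crosscut of $W$: its interior lies in $W$ and its endpoints are prime ends of $W$ on $\gamma$. The hypothesis $\gamma[\rho_k,\rho_n] \subset \Disk_j$, combined with the fact that $\rho_k$ is the first time $\gamma$ reaches $\p\Disk_k$, forces all such endpoints to be prime ends on a single connected sub-arc of $\p W$, namely the one corresponding to $\gamma[\sigma,\rho_n]$ for $\sigma = \sup\{t \leq \rho_k : \gamma(t) \in \p\Disk_j\}$. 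Conformally uniformizing $W \to \Disk$ with $0 \mapsto 0$, the crosscuts become pairwise disjoint chords whose endpoints all lie on a common boundary arc of $\p\Disk$, while $\p W \cap \p\Disk_j$ corresponds to the complementary arc. A planar-topology argument then singles out a unique crosscut $l^{**} \in \arcs_{n,k}$ whose non-origin component of $W \setminus l^{**}$, call it $W^1(l^{**})$, satisfies $\overline{W^1(l^{**})} \cap \p\Disk_j \neq \eset$; for every other $l$ the corresponding non-origin bay is a topological dead end whose entire $\p W$-boundary is contained in $\gamma$. A Jordan-curve argument applied to $\eta$ together with the appropriate boundary arcs of $\p W$ then shows that a simple $\eta$ first exiting through any $l \neq l^{**}$ is trapped before it can reach $\p\Disk_j$ without self-intersection, so $l_\eta = l^{**}$ for every admissible $\eta$.

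Finally, when $\p_n^0 \neq \eset$ I would identify $l^{**}$ with $l^*_{n,k}$: by definition $\p_n^0 \subset \p V_{n,k,l^*}$; since $\p_n^0 \subset \p\Disk$ lies outside $\overline{\Disk_j}$ while $l^* \subset \Disk_j$, the connected set $V_{n,k,l^*}$ contains points both outside and inside $\Disk_j$, and therefore its closure meets $\p\Disk_j$. Hence $l^*$ satisfies the characterizing property of $l^{**}$, and uniqueness yields $l = l^*$. The main obstacle I expect is the Jordan-curve step within the uniqueness part: one has to rigorously exclude the scenario where a simple $\eta$ first dives through a ``wrong'' crosscut into a dead-end bay, retreats, and then exits through the correct one, which requires careful exploitation of planarity together with the structural fact that all crosscut-chords in the uniformized picture have their endpoints on a common boundary arc.
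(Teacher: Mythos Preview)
Your existence step and the final identification with $l^*_{n,k}$ are fine, but the uniqueness argument is both more elaborate than necessary and has a genuine gap. The paper does not try to single out a distinguished crosscut $l^{**}$ in advance via uniformization. Instead it takes two admissible curves $\eta^1,\eta^2$ with first-exit points $z_1,z_2\in\p\Disk_k$, lets $l^1,l^2$ be the two arcs of $\p\Disk_k\setminus\{z_1,z_2\}$ with $\gamma(\rho_k)\in l^1$, and shows directly that $l^2\cap\gamma[0,\rho_n]=\eset$. Indeed $\gamma[0,\rho_k)$ misses $\p\Disk_k$ entirely; and $\gamma[\rho_k,\rho_n]$ is a connected subset of $\Disk_j$ disjoint from $\eta^1\cup\eta^2$, while $\eta^1\cup\eta^2$ (two arcs from $0$ to $\p\Disk_j$) separates $\gamma(\rho_k)\in l^1$ from $l^2$ inside $H_k\cap\Disk_j$. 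Thus $l^2\subset H_n$, so $z_1,z_2$ lie in the same component of $\p\Disk_k\cap H_n$, hence in the same element of $\arcs_{n,k}$. This is a one-paragraph separation argument with no uniformization and no prime-end bookkeeping.

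The specific gap in your plan is the ``trapped'' step. Even granting that all crosscut-chords in the uniformized $W$ have endpoints on a single boundary arc $\Gamma$ while the $\p\Disk_j$ prime ends lie in the complementary arc (and note your phrasing is slightly off: the complementary arc may contain several $\p\Disk_j$-arcs interleaved with other $\gamma$-excursion arcs), it does not follow that a simple $\eta$ first crossing a ``wrong'' chord $l\neq l^{**}$ is trapped. The bay behind $l$ has its entire circular boundary on $\Gamma$, so $\eta$ cannot terminate there; but nothing prevents $\eta$ from recrossing $l$ at a second point, re-entering the $0$-side, and then proceeding to $l^{**}$. Simplicity of $\eta$ does not forbid multiple transversal crossings of a fixed arc, and removing the slit $\eta[0,s_0)$ from the $0$-side of $l$ still leaves a connected domain in which $l^{**}$ is accessible. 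Closing this gap seems to require an argument of exactly the paper's type (using both curves simultaneously to produce a separating set), at which point the uniformization machinery is superfluous.
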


\begin{proof}   Call $l \in \arcs_{n,k}$ {\em good} if there exists
a curve $\eta$ as above with $\eta(s_0)  \in l$.
Since $H_n \not\subset \Disk_j$, there exists at least one good
$l$. Also, if $-1 \in \p H_n$,
then $l^*_{n,k}$ is good.  Hence, we only need to
show there is at most one good $l \in \arcs_{n,k}$.
Suppose $\eta^1,\eta^2$ are two such curves with
times $s_0^1,s_0^2$ and let $z_j = \eta(s_0^j)$.
  We need to show that $z_1$ and $z_2$
are in the same crosscut  in ${\mathcal A}_{n,k}$. If
$z_1 = z_2 $ this is trivial, so assume
$z_1 \neq z_2$.  Let $l^1,l^2$ denote the two subarcs of
$\p \Disk_k$ obtained by removing $z_1,z_2$ (these are not
crosscuts in ${\mathcal A}_{n,k}$).   Let $l^1$ denote the
arc that contains $\gamma(\rho_k)$ and let $U$ denote the
connected component of $(H_k \cap \Disk_j)
\setminus (\eta^1 \cup \eta^2)$ that contains $\gamma(\rho_k)$.
Our assumptions imply that $\gamma[\rho_k,\rho_n] \subset
U$.  In particular, $l^2 \cap \gamma[\rho_k,\rho_n]
 = \emptyset$.  Therefore $l^2,z_1,z_2$ lie in the same
component of $H_n$ and hence in the same crosscut of ${\mathcal
A}_{n,k}$.
\end{proof}

\subsubsection{Argument}  \label{argsec}

\begin{definition}
  If $\gamma$ is a Loewner  curve in $\Disk$ starting
at $1$, $w \in \p \Disk \setminus \{1\}$, and $t < T_w$, then
\[      S_t = S_{t,0,w} =
 \sin \arg F_t(0) , \]
where $F_t: D_t \rightarrow \Half$ is a conformal transformation
with $F_t(\gamma(t)) = 0, F_t(w) =  \infty$.  
\end{definition}

\begin{comment}
 If $t \geq \eta$,
we set $S_t = 0$.  
\end{comment}
 If $z \in \Half$, let $h_+(z)
= h_\Half(z,(0,\infty))$ denote the
probability that a Brownian motion starting at $z$ leaves $\Half$
at $(0,\infty)$ and let $h_-(z) = 1 - h_+(z)$ be the probability
of leaving at $(-\infty,0)$.  Using the explicit form of the Poisson
kernel in $\Half$, one can see that $h_-(z) = \arg(z)/\pi$. Using
this, we can see that
\[   S_0 = \sin \theta_0 \]
and 
\begin{equation} \label{may6.1}
      \sin \arg(z) \asymp \min\left\{h_+(z), h_-(z) \right\},
\end{equation}
where $\asymp$ means each side is bounded by an absolute constant
times the other side.

If $t < T_w$, we can write $\p D_t = \{\gamma(t)\} \cup \{w\}
 \cup \p_t^+ \cup \p_t^-$ where $\p_t^+$  ($\p_t^-$)  
is the part of $\p D_t$
that is sent to the positive (resp., negative) real axis by $F_t$.
Using conformal invariance and \eqref{may6.1}, we see that
\begin{equation}  \label{may6.1.new}
  S_t \asymp \min \left \{h_{D_t}(0, \p_t^+),
   h_{D_t}(0,\p_t^-)\right\}. 
\end{equation}

\subsubsection{Extremal length}  \label{extremesec}

The proofs of our deterministic lemmas will use
estimates of extremal length.  These can be
obtained by considering appropriate estimates for
Brownian motion which are contained in the next
lemma. 
  Let $\rect_L$
denote the open region bounded by a rectangle,
\[  \rect_L = \{x+iy \in \C: 0 < x < L, 0 < y < \pi \}. \]
We write $\p \rect_L = \p_0 \cup \p_l \cup
 \p^+_L \cup \p^-_L$ where  
\[   \p_0 = [0,i\pi],\;\;\;\;
  \p_L = [L, L+i\pi],\;\;\;\; \p^+_L = (i\pi,L + i \pi),
\;\;\;\; \p^-_L = (0,L). \]
If $D$ is a simply connected domain and $A_1,A_2$ are 
disjoint arcs on $\p D$, then the {\em $\pi$-extremal distance}
($\pi$ times the usual extremal distance or length) is the
unique $L$ such that there is a conformal transformation of
$D$ onto $\rect_L$ mapping $A_1,A_2$ onto $\p_0$ and
$\p_L$, respectively.   Estimates for the Poisson kernel
in $\rect_L$ are standard, see, for example, \cite[Sections 2.3 and
5.2]{LBook}.  The next two lemmas which we state without
proof give the estimates that we need. 

\begin{lemma}   \label{may4.lemma1}
There exist $0 < c_1 < c_2 < \infty$
such that the following holds.  Suppose $L \geq 2$,
and $V$ is the closed disk of radius $1/4$ about
$1 + (\pi/2)i$.
\begin{itemize}
\item If $z \in V$,
\begin{equation}  \label{jan25.0}
  c_1 \leq h_{\rect_L}(z,\p_0),
   h_{\rect_L}(z,\p^+_L ), 
    h_{\rect_L}(z,\p^-_L ) \leq c_2, 
\end{equation}
\item  If $z \in V$ and $A \subset \p_L$, then
\begin{equation}  \label{jan25.1}
%   c_1\, e^{-L}\, |A|
%  \leq   
   h_{\rect_L}(z, A) \leq c_2 \, e^{-L}\, |A|,
\end{equation}
where $|\cdot|$ denotes length.
%\item If $0 < y < \pi$, and $A \subset \p_L$, then
%\begin{equation}  \label{jan25.2}
%   h_{\rect_L}(1+iy,
%                 A ) \leq c_2 \,[ \sin y] \, \max_{z \in V}
%               h_{\rect_L}(z,A).
%\end{equation}
\item If $B_t$ is a standard Brownian motion,
$\tau_L = \inf\{t: B_t \not \in \rect_L\},$
 $\sigma = \inf\{t: \Re(B_t) = 1\}$, then if
$0 < x < 1/2$ and $0 < y < \pi$,
\begin{equation}  \label{jan25.3}
   \Prob^{x+iy}\{B_\sigma \in V \mid \sigma< \tau_L\} \geq c_1. 
\end{equation}
\end{itemize}
\end{lemma}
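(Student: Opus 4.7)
The plan is to use a Fourier sine-series representation for harmonic functions in $\rect_L$, combined with the exponential decay of the individual modes. If $u$ is harmonic in $\rect_L$ with $u = f(\eta)$ on $\p_L$ and $u \equiv 0$ on the other three sides, then separation of variables gives
\[ u(x+iy) = \frac{2}{\pi}\sum_{n=1}^\infty \frac{\sinh(nx)}{\sinh(nL)}\,\sin(ny)\,\int_0^\pi f(\eta)\sin(n\eta)\,d\eta, \]
with analogous expansions when the data lie on $\p_0$ (replace $\sinh(nx)$ by $\sinh(n(L-x))$) and on the horizontal sides (use the basis $\sin(n\pi x/L)$ in the $x$-variable).

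To prove (\ref{jan25.1}), take $f = \mathbf{1}_A$ on $\p_L$ and use $\sinh(nx)/\sinh(nL) \leq 2\,e^{-n(L-x)}$; for $z = x+iy \in V$ one has $x \leq 5/4$, so the Poisson kernel on $\p_L$ seen from $z$ is at most a constant times $e^{-L}$, and integrating over $A$ yields the claim. For (\ref{jan25.0}) the upper bounds are trivial (they are probabilities); for the lower bounds the strategy is to isolate the $n = 1$ term of the corresponding expansion. For $\p_0$ this leading term equals $\frac{4}{\pi}\cdot\frac{\sinh(L-x)}{\sinh L}\sin y$, which is bounded below for $z \in V$ and $L \geq 2$ by an absolute positive constant, while the remaining odd modes form an alternating series whose first omitted term is $O(e^{-3(L-1)})$; the lower bounds on $\p^{\pm}_L$ follow analogously from the horizontal Fourier expansion evaluated at its first mode.

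For (\ref{jan25.3}) the key reduction is that since $L \geq 2 > 1$ the domain $\tilde R := \rect_L \cap \{\Re w < 1\}$ coincides with the fixed rectangle $(0,1) \times (0,\pi)$, independent of $L$, and the event $\{\sigma < \tau_L\}$ is exactly the event that Brownian motion exits $\tilde R$ through the right side $I := \{1\} \times [0, \pi]$. Set $p(z) = \Prob^z\{B_\sigma \in I\}$ and $q(z) = \Prob^z\{B_\sigma \in V \cap I\}$; both are positive harmonic on $\tilde R$, and both vanish on $\p\tilde R \setminus I$. The boundary Harnack principle then implies that the ratio $q/p$ is comparable, up to multiplicative constants, to its value at any fixed interior reference point $z_0$, where a direct Fourier or explicit Poisson-kernel computation gives $q(z_0)/p(z_0) > 0$; this yields (\ref{jan25.3}).

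The main obstacle is establishing the uniform lower bound in (\ref{jan25.3}) as $(x, y)$ approaches the corners of $\tilde R$ at $(0, 0)$ and $(0, \pi)$, where two non-$I$ boundary sides meet and the standard boundary Harnack principle does not apply directly. A clean workaround is to invoke the Schwarz-Christoffel map from $\tilde R$ to $\Half$ sending the four rectangle corners to four real points: under this map, $p$ and $q$ become integrals of the half-plane Poisson kernel over fixed intervals on $\R$, and the required ratio bound reduces to an elementary estimate.
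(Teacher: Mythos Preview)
The paper states this lemma without proof, citing standard Poisson-kernel estimates in rectangles, so there is no proof in the paper to compare against. Your Fourier/separation-of-variables approach is a natural way to supply the details, and your arguments for \eqref{jan25.1} and for \eqref{jan25.3} (via the Schwarz--Christoffel reduction to handle the corners) are sound.

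There is, however, a genuine gap in your lower bound for $h_{\rect_L}(z,\p^{\pm}_L)$ in \eqref{jan25.0}. The horizontal expansion you invoke uses the basis $\sin(n\pi x/L)$ in the $x$-variable, and its first mode is
\[
\frac{4}{\pi}\,\sin\!\Big(\frac{\pi x}{L}\Big)\,\frac{\sinh(\pi y/L)}{\sinh(\pi^2/L)},
\]
which for $z\in V$ (so $x\asymp 1$, $y\asymp \pi/2$) is of order $1/L$ as $L\to\infty$. The harmonic measure of $\p^+_L$ from $z\in V$ is indeed bounded below uniformly in $L$, but that mass is spread over roughly $L$ Fourier modes rather than concentrated in the first one, so ``evaluating at the first mode'' does not give the required lower bound. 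The fix is immediate and avoids Fourier entirely: since $\p^+_2\subset\p^+_L\subset\p\rect_L$ and $\rect_2\subset\rect_L$, a Brownian path from $z\in V\subset\rect_2$ that exits $\rect_2$ through $\p^+_2$ has at that moment exited $\rect_L$ through $\p^+_L$; hence $h_{\rect_L}(z,\p^+_L)\ge h_{\rect_2}(z,\p^+_2)$, which is a fixed positive constant independent of $L$. (A smaller slip: in your $\p_0$ tail estimate the bound on the first omitted term should be $O(e^{-3x})$ with $x\ge 3/4$, not $O(e^{-3(L-1)})$; the suppression comes from the distance of $z$ to $\p_0$, not from $L$. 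This does not affect the conclusion since $e^{-9/4}$ is already small enough.)
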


\begin{lemma} \label{may18.newlemma}
 For every $\delta > 0$, there exists $c > 0$
such that if $L \geq \delta$ and $z \in \rect_L$ with $\Re(z)
\leq \delta/2$, then
\[     h_{\rect_L}(z, \p_L) \leq
     c \, e^{-L} \, \min\left\{ h_{\rect_L}(z, \p_L^+),
     h_{\rect_L}(z, \p_L^-)\right\}. \]
\end{lemma}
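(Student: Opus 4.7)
The plan is a strong Markov argument: stop a Brownian motion from $z$ at the first hit of a vertical line deep inside $\rect_L$, apply Lemma \ref{may4.lemma1} there, and for the upper bound on $h_{\rect_L}(\cdot, \p_L)$ use a direct Fourier-series estimate of the Poisson kernel in the rectangle. For simplicity I will describe the argument assuming $L \geq 2$ and $\delta \leq 1$; the remaining ranges ($\delta \leq L \leq 2$, and $\delta > 1$) reduce to this case by a compactness argument and a short Harnack chain, respectively, with the constants absorbed into $c$.

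Let $\sigma = \inf\{t \geq 0 : \Re(B_t) = 1\}$ and $\tau = \tau_L = \inf\{t \geq 0 : B_t \notin \rect_L\}$. Because $\Re(z) \leq \delta/2 < 1$, any Brownian path from $z$ that reaches $\p_L$ must first cross the line $\{\Re = 1\}$, so by the strong Markov property
\[
 h_{\rect_L}(z, \p_L) = \E^z\bigl[h_{\rect_L}(B_\sigma, \p_L) \, ; \, \sigma < \tau\bigr].
\]
The key step is the uniform bound $h_{\rect_L}(w, \p_L) \leq C e^{-L}$ for \emph{every} $w \in \rect_L$ with $\Re(w) = 1$ --- note that Lemma \ref{may4.lemma1} supplies an estimate of this shape only for $w$ in the small disk $V$. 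I would obtain this via the standard Fourier expansion
\[
 h_{\rect_L}(x + iy, \p_L) = \sum_{n\ \text{odd}} \frac{4}{n\pi} \, \frac{\sinh(nx)}{\sinh(nL)} \, \sin(ny),
\]
together with the elementary inequality $\sinh(n)/\sinh(nL) \leq 2 e^{-n(L-1)}$ (valid for $nL \geq 1$) and $|\sin(ny)| \leq 1$; the $n=1$ term dominates and the whole sum is $O(e^{-(L-1)}) = O(e^{-L})$. Inserting this into the strong Markov display gives $h_{\rect_L}(z, \p_L) \leq C e^{-L} \Prob^z\{\sigma < \tau\}$.

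For the matching lower bound on $h_{\rect_L}(z, \p_L^{\pm})$, the hypothesis $\Re(z) \leq \delta/2 \leq 1/2$ makes \eqref{jan25.3} applicable, giving $\Prob^z\{B_\sigma \in V \mid \sigma < \tau\} \geq c_1$, and \eqref{jan25.0} gives $\inf_{w \in V} h_{\rect_L}(w, \p_L^{\pm}) \geq c_1$. A second application of strong Markov yields
\[
 h_{\rect_L}(z, \p_L^{\pm}) \geq c_1 \cdot c_1 \, \Prob^z\{\sigma < \tau\}.
\]
Dividing the two displayed estimates gives $h_{\rect_L}(z, \p_L) \leq (C/c_1^2)\, e^{-L}\, h_{\rect_L}(z, \p_L^{\pm})$ for both choices of sign, and taking the smaller of the two right-hand sides produces the $\min$ appearing in the lemma.

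The main obstacle is the uniform Poisson-kernel bound on the line $\{\Re = 1\}$, since the version in Lemma \ref{may4.lemma1} is only pointwise on the disk $V$; the Fourier-series computation above is what bridges that gap. The auxiliary cases --- $L$ bounded and $\delta > 1$ --- are routine but should be flagged when writing out the proof: for $\delta \leq L \leq 2$ the ratio of the two harmonic functions is bounded on the compact region $\{\Re(z) \leq \delta/2\}$ (the zeros match at the corners), and for $\delta > 1$ a Harnack chain transports the estimate from $\{\Re(z) \leq 1/2\}$ out to $\{\Re(z) \leq \delta/2\}$ at the cost of a constant depending on $\delta$.
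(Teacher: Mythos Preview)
The paper does not actually prove this lemma: it is the second of the two lemmas that the author explicitly ``state[s] without proof'', referring instead to standard Poisson-kernel estimates in \cite[Sections 2.3 and 5.2]{LBook}. So there is no paper proof to compare against; what matters is whether your argument is correct and in the spirit of the surrounding text.

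Your argument is correct. The strong Markov decomposition at the line $\{\Re = 1\}$, the Fourier-series bound $h_{\rect_L}(w,\p_L) = O(e^{-L})$ uniformly on that line, and the lower bound on $h_{\rect_L}(z,\p_L^{\pm})$ via \eqref{jan25.3} and \eqref{jan25.0} are precisely the ingredients the paper has in mind --- indeed, the paragraph immediately following Lemma~\ref{may18.newlemma} (``We explain the basic idea on how we will use these estimates'') sketches exactly this pattern of stopping on a crosscut, funneling through the disk $V$, and comparing the two harmonic measures. The Fourier computation you add is the one genuine piece of input beyond Lemma~\ref{may4.lemma1}, and it is routine.

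Your treatment of the auxiliary ranges is a bit thin. For $\delta \leq L \leq 2$ the ``zeros match at the corners'' remark is correct (both harmonic measures vanish to second order at the corners $0$ and $i\pi$, and only those corners lie in $\{\Re(z)\leq \delta/2\}$), but you should also note that $L$ ranges over a compact interval, so the implicit constant is uniform in $L$. For $\delta > 1$ a cleaner alternative to the Harnack chain is simply to move the stopping line to $\{\Re = 3\delta/4\}$ and repeat the same argument with constants depending on $\delta$; this avoids having to control the ratio near the horizontal edges. Either way these are routine adjustments and do not affect the substance of the proof.
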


We explain the basic idea on how we will
 use these estimates.  Suppose
$D$ is a domain and $l$ is a crosscut of $D$ that divides
$D$ into two components $D_1,D_2$. Suppose $D_2$
is simply connected and    $A$ is a closed subarc
of $\p D_2 $ with $\p D_2 \cap \overline l = \emptyset.$ Let
$\p^+,\p^-$ denote the  connected
components of $\p D_2 \setminus\{\overline
l,A\}$.  We consider $A,\p^+,\p^-$ as arcs of $\p D$ in
the sense of prime ends.
Let $F: D_2 \rightarrow \rect_L$ be a conformal transformation
sending $l$ to $\p_0$ and $A$ to $\p_L$ and suppose
that $L \geq 2$. Let $l_1 =
 F^{-1}(1 + i(0,\pi)).$  Let $\tau = \inf\{t:B_t
\not \in D\}, \sigma = \inf\{t: B_t \in l_1\}$.  Then
if $z \in D_1$ and $A_1 \subset A$, 
\[           h_{D}(z, \p^+),
    h_{D}(z, \p^-)\geq c \,
          \Prob^z\{\sigma < \tau\}. \]
\[   h_{D}(z, A_1)\leq c \, 
\Prob^z\{\sigma < \tau\} 
 \,  e^{-L}\, |F(A_1)|. \]
In particular, there exists $c< \infty$ such that
for $z \in D_1, A_1 \subset A,$
\[     h_{D}(z, A_1)\leq c \, 
  e^{-L}\, |F(A_1)| \, \min\left\{ h_{D}(z, \p^+),
    h_{D}(z, \p^-)\right\}. \]

\subsubsection{Radial case}

We will need some lemmas that hold for all curves
$\gamma$ stopped at the first time they reach 
the sphere of a given radius or the first time
they reach a given vertical line.
 If $D$
is a domain and  $\eta:(0,1) \rightarrow D$ is
a crosscut, we
  write $\eta$ for the image $\eta(0,1)$ and
$\overline \eta = \eta[0,1]$.

\labove \textsf
{\Heuristic  \begin{small}  
  The next lemma is a lemma about Loewner curves, that is,
curves modulo reparametrization.  To make the statement nicer,
 we choose
a parametrization such that $\rho_{n+k} = 1$.  Although the
parametrization is not important, it is important that we
are stopping the curve at the first time it reaches $\p
\Disk_{n+k}$.
 \end{small}}
\lbelow

%%%%%%%%%%%%%%%%%%%%%%%%%%%%% 

\begin{lemma}  \label{deterlemma}
There exists $c < \infty$ such that the following is
true.  Suppose $k > 0, n \geq 4$ and $\gamma:[0,1]
\rightarrow \Disk$ is a Loewner
curve with $\gamma(0) = 1;
|\gamma(1)| =e^{-n-k}$; and $e^{-n-k} < |\gamma(t)| 
< 1$ for $0 < t < 1$.  Let $D$ be the connected
component of  $ \Disk \setminus
\gamma(0,1]$ containing the origin, and let
\[ \eta = \{e^{-k + i\theta}: \theta_1 < \theta < \theta_2\}
\in \arcs_{n+k,k}
\]
 be a crosscut of $D$ contained in $\p \Disk_k$.

%Suppose $\eta$ is a crosscut in ${\mathcal A}_{n,k}$,
%    and let $U^*$ denote the connected component
%of \[ (D \cap \{|z| > e^{-n-k}\}) \setminus \eta\]
%whose boundary includes $\p \Disk_{n+k}$.   Write $\p U^*$ 
%(considered as prime ends) as
%a disjoint union
%\[   \p U^* = \p \Disk_{n+k} \cup \overline \eta \cup \p_-\cup 
%\p_+ , \]
%where $\p_-, \p_+$ are connected arcs.
 Let $F: D \rightarrow \Disk$ be
the unique conformal transformation with $F(0) = 0,
F(\gamma(1)) = 1$.
Suppose that we write $\p \Disk$ as a disjoint union
\[     \p \Disk  = \{1\} \cup V_1 \cup V_2
  \cup V_3, \]
where $V_3$ is the closed interval of $\p \Disk$
 not containing
$1$
 whose endpoints are
the images under $F$ of $\eta(0+),
\eta(1-)$ and $V_1,V_2$
are connected, open intervals.  Then
\[    \diam[F(\eta)] \leq c \, e^{-n/2} \, (\theta_2-\theta_1) \,
        \min\left\{|V_1| , |V_2|\right\}, \]
%\[  h_{D \setminus \eta}(0,\eta) \leq
%   c \, , e^{-n/2} \, e^{k} \,\diam[\eta] \,
% \min\left\{h_{D \setminus \eta}(0,V_1),
 %   h_{D \setminus \eta}(0,V_1)\right\}. \]
where $|\cdot|$ denotes length.
\end{lemma}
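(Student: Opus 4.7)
The plan is to reduce $\diam F(\eta)$ to a harmonic-measure bound via conformal distortion, then apply the extremal-length strategy at the end of Section \ref{extremesec} together with the Beurling estimate. First I would set up a conformal rectangle in $D_0$. Let $E$ denote the component of $D \setminus \overline\eta$ not containing $0$, so that $\p E = \overline\eta \cup F^{-1}(V_3)$, and let $D_0 = D \setminus \overline E$, which is simply connected and contains both $0$ and $\gamma(1)$. Choose a small closed arc $\tilde A \subset \p D \cap \p D_0$ around $\gamma(1)$, disjoint from $\overline\eta$; then $\p D_0$ decomposes in cyclic order as $\overline\eta$, $B_+$, $\tilde A$, $B_-$, where $B_\pm$ are sub-arcs of $\gamma$ whose $F$-images lie inside $V_1$ and $V_2$ respectively.

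Next, let $L$ be the $\pi$-extremal distance in $D_0$ between $\overline\eta$ and $\tilde A$, with $\Psi\colon D_0 \to \rect_L$ the associated conformal map sending $\eta$ to $\p_0$ and $\tilde A$ to $\p_L$. The strategy at the end of Section \ref{extremesec} then gives, for every $z \in E$,
\[
h_D(z, \tilde A) \leq c\, e^{-L}\, |\Psi(\tilde A)|\, \min\{h_D(z, B_+), h_D(z, B_-)\}.
\]
I would show $L \geq n/2 - O(1)$ via the Beurling estimate: the connected set $\gamma \cup \overline{\Disk_{n+k}}$ joins $\p\Disk$ to $\{0\}$, forcing any path in $D_0$ from near $\eta$ (at scale $e^{-k}$) to near $\gamma(1)$ (at scale $e^{-n-k}$) to traverse $n$ logarithmic scales. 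The corresponding conformal modulus exceeds $n/2$ up to an additive constant, matching the modulus of the radially slit annulus $\{e^{-n-k} < |w| < e^{-k}\}$.

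Third, I would convert back to $\diam F(\eta)$ via Koebe's distortion theorem. For $z$ in the middle of $\eta$, $|F'(z)| \asymp (1-|F(z)|)/\mathrm{dist}(z,\p D)$ with $\mathrm{dist}(z,\p D) \asymp (\theta_2-\theta_1) e^{-k}$; the factor $1-|F(z)|$ is in turn comparable to a harmonic measure analogous to $h_D(z, \tilde A)$ estimated in Step 2, once $\tilde A$ is chosen so that $|\Psi(\tilde A)| \asymp 1$, picking up the factor $e^{-L} \min\{|V_1|, |V_2|\}$. Since $\diam F(\eta) \leq |F(\eta)| = \int_\eta |F'(z)|\,|dz|$, integration against the Euclidean length $(\theta_2-\theta_1) e^{-k}$ of $\eta$ then yields the stated bound.

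The main obstacle is Step 2: the lower bound $L \geq n/2 - O(1)$ must hold uniformly over all admissible Loewner curves $\gamma$, and the Beurling estimate must be applied with care because $\gamma$ and $\p D_0$ can intersect $\Disk_k$ and $\p\Disk_{n+k}$ in a complicated way. A secondary challenge in Step 3 is handling $z$ near the endpoints of $\eta$, where $\gamma$ may come arbitrarily close and the pointwise Koebe bound deteriorates; this is usually resolved by replacing the pointwise bound with an integrated harmonic-measure estimate over $\eta$.
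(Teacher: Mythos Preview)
Your overall strategy---reduce to a harmonic-measure inequality and control it via a conformal rectangle plus Beurling---is the right one, and matches the paper's approach in spirit. However, the specific setup and Step~3 contain genuine gaps.

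\textbf{The rectangle is set up on the wrong side.} The quantity to be bounded is $\diam F(\eta)$, which is comparable to $h_{D_0}(0,\eta)$, the harmonic measure of $\eta$ from the origin in the component $D_0 = U$ containing $0$. But your application of the Section~\ref{extremesec} template yields an estimate only for $z\in E$, the component on the \emph{other} side of $\eta$. The point $0$ lies inside your rectangle domain $D_0$, not outside it, so the template as stated does not apply to $z=0$. In the paper the rectangle is $U^* = U \cap \{|z|>e^{-n-k}\}$ with short sides $\partial\Disk_{n+k}$ and $\overline\eta$ (this is simply connected precisely because $\gamma$ is stopped at its first visit to $\partial\Disk_{n+k}$). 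Then $0\in\Disk_{n+k}$ lies \emph{outside} $U^*$, and Brownian motion from $0$ must enter $U^*$ through $\partial\Disk_{n+k}$ before reaching $\eta$; this is what makes the conditioning work.

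\textbf{Step~3 goes in the wrong direction.} For $z$ in the middle of $\eta$ you only have the \emph{upper} bound $\dist(z,\partial D)\leq c(\theta_2-\theta_1)e^{-k}$ (distance to an endpoint of $\eta$); the curve $\gamma$ may come arbitrarily close to $z$ elsewhere, so no lower bound is available. Koebe then gives a \emph{lower} bound on $|F'(z)|$, not the upper bound you need. Moreover, the identification $1-|F(z)|\asymp h_D(z,\tilde A)$ is not correct: for $z\in\eta$ the image $F(z)$ sits near $V_3$, so $1-|F(z)|$ is governed by distance to $V_3$, not to $F(\tilde A)$ near $1$. Finally, note that $|\Psi(\tilde A)|=\pi$ by construction, so the condition ``$|\Psi(\tilde A)|\asymp 1$'' is vacuous and cannot be used to tune anything.

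\textbf{How the paper closes the argument.} With the rectangle $U^*$ as above, let $\ell\subset U^*$ be the preimage of the vertical segment at $\Re=1$ in $\rect_L$. First, $L\geq n/2$ by comparison with an annulus. Second, Beurling forces $\ell\subset\Disk_{n+k-j}$ for a fixed $j$, because from $\ell$ the harmonic measure of $\partial\Disk_{n+k}$ in $U^*$ is bounded below. Third, from any point of $\ell$, Beurling bounds the probability of reaching $\partial\Disk_{k+1}$ by $ce^{-n/2}$, and then the Poisson kernel gives an extra factor $(\theta_2-\theta_1)$ for hitting $\eta$. Conditioning on $\{\sigma<\tau\}$ (where $\sigma$ is the hitting time of $\ell$) and using the rectangle estimates for $\partial_\pm$ yields
\[
h_U(0,\eta)\;\leq\;c\,e^{-n/2}(\theta_2-\theta_1)\,\min\{h_U(0,\partial_-),\,h_U(0,\partial_+)\},
\]
which is equivalent to the stated conclusion. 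The key missing idea in your outline is to excise $\Disk_{n+k}$ so that the origin sits outside the conformal rectangle, allowing the Section~\ref{extremesec} machinery and a second use of Beurling (from $\ell$ back out to scale $e^{-k}$) to produce both the $e^{-n/2}$ and the $(\theta_2-\theta_1)$ factors.
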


\labove \textsf
{\Heuristic  \begin{small}  
 It is important for our purposes to show not only that
 $F(\eta)$ is small, but also that it is smaller than both
 $V_1$ and $V_2$.  When we apply the proposition, one
 of the intervals $V_1,V_2$ may be very small.
 \end{small}}
\lbelow

\begin{proof} 
Let $U$ denote the connected component of $D \setminus
\eta$ that contains the origin and note that $U$
is simply connected.
Let
\[ U^* = U \cap \left\{|z| > e^{-n-k} \right\}. \]
Since $\gamma(0,1) \subset \{|z| > e^{-n-k}\}$ and
$|\gamma(1)| = e^{-n-k}$, we can see that $U^*$ is
simply connected with  $\eta \cup \p
\Disk_{n+k} \subset \p U^*$. 
Let \[ g:  \rect_L   \longrightarrow
U^*\] be a conformal transformation mapping $\p_0$ onto
$\p \Disk_{n+k}$ and $\p_L
$ onto $\overline \eta$.  Such a transformation exists for
only one value of $L$, the $\pi$-extremal distance
  between
$\p \Disk_{n+k}$ and $\overline \eta$ in $U^*$.  Since
$\eta \cap \Disk_k = \emptyset$, and the complement of $U^*$
contains a curve connecting $\p \Disk_k$ and $\p \Disk_{n+k}$, 
  see that  $L \geq n/2 \geq 2$ (this can be done by comparison with
  an annulus, see. e.g., \cite[Example 3.72]{LBook}).    We write
\[   \p U^* = \p \Disk_{n+k} \cup \overline \eta \cup \p_-\cup 
\p_+ \]
where $\p_-$ ($\p_+$) is the image of $\p_L^-$
(resp., $\p_L^+$) under $g$.  Here we are considering
boundaries in terms of prime ends, e.g., if $\gamma$ is
simple then each point on
  $\gamma(0,1)$ corresponds to two points
in $\p D$.
 Note that $\{F(\p_-),F(\p_+)\}$ 
is $\{V_1, V_2\}$, so we can rewrite the conclusion of
the lemma  as
\begin{equation}  \label{apr6.5}
  h_U(0,\eta) \leq c \, e^{-n/2} \,(\theta_2 - \theta_1)
  \, \min\left\{ h_U(0,\p_-),  h_U(0,\p_+)
 \right\}. 
 \end{equation}

 Let $\ell = g(1 + i(0,\pi))$
which  separates $\p \Disk_{n+k}$
from $\eta$, and hence also separates the origin from $\eta$ in
$U$.  
 Let $B_t$ be a Brownian motion starting at the origin
 and
let 
\[ \sigma = \inf\{t: B_t \in \ell\}, \;\;\;\tau =
\inf\{t: B_t \not \in U\}. \]  
 
 Using conformal invariance and \eqref{jan25.0},
we can see that if $z \in
g(V)$, the probability that   a Brownian motion starting
at $z$ exits $U^*$ at $\p \Disk_{n+k}$ is at least $c_1$. However,
the Beurling estimate implies that this probability is bounded
above by $c \, [e^{-(n+k)} / |z|]^{1/2}. $  From
this we conclude that 
there exists $j$ such that $g(V)$ is contained in $\Disk_{n+k -j}$.
We claim that there exists
$c$ such that the probability that a Brownian motion
starting at $z \in D_{n+k-j}$ exits $U^*$ at $\eta$ is bounded 
above by $c
 e^{-n/2} \, (\theta_2- \theta_1)$.  Indeed, the Beurling estimate implies that
the probability to reach
$\p \Disk_{k+1}$ without leaving $U^*$ is 
$O(e^{-n/2})$, and using the Poisson kernel in the disk
we know that the probability
that a Brownian motion starting on $\p \Disk_{k+1}$ exits
$\Disk_k$ at $\eta$ is bounded above by $c\,(\theta_2 - \theta_1)$.  Using
\eqref{jan25.1}, we conclude that 
\[
\Prob\{B_\tau \in \eta \mid \sigma < \tau \} 
\leq c\, \max_{z \in g(V)} h_{U^*}(z,\eta)
  \leq c \, (\theta_t-\theta_1) \, e^{-n/2} . 
\]
Since the event $\{B_\tau \in \eta\}$ is contained in the event $\{\sigma < \tau\}$,
we see that
   \begin{equation}  \label{jan25.4}
        \Prob\{B_\tau \in \eta\} \leq c
                 \,  (\theta_t-\theta_1) \, e^{-n/2} \, \Prob\{\sigma < \tau\}. 
\end{equation}

%Let $B_t$ be a Brownian motion starting at the origin
% and
%let 
%\[ \sigma = \inf\{t: B_t \in \ell\}, \;\;\;\tau =
%\inf\{t: B_t \not \in U\}. \]  Since $L \geq 2$, the
%event $\{B_\tau \in \eta\}$ is contained in the event $\{\sigma < \tau\}$.
%
%
% Since $\ell$ separates the origin from $\eta$ in $U$,
%\[     \Prob\{B_\tau \in \eta\} = \Prob\{\sigma < \tau\}
%   \, \Prob\{B_\tau \in \eta  \mid \sigma < \tau\}.\]
%Using conformal invariance and \eqref{jan25.2},
% we can see that
%\[   \Prob\{B_\tau \in \eta  \mid \sigma < \tau\}
%  \leq c \, e^{-L}. \]
Using \eqref{jan25.3} and conformal
invariance,  we can see that
\[ \Prob\{B_\sigma \in g(V) \mid \sigma < \tau\}
  \geq c_1, \]
and combining this with \eqref{jan25.0} we see that
\[   \min \left\{  \Prob\{B_\tau \in \p_- \mid \sigma < \tau\},
 \; \Prob\{B_\tau \in \p_+ \mid \sigma  < \tau\}\right\}
  \geq c_1^2. \]
  In particular, there exists $c$ such that
  \[    \min \left\{  \Prob\{B_\tau \in \p_-\} , \Prob\{B_\tau \in 
  \p_+\} \right\} \geq c \, \Prob\{\sigma < \tau\}. \]
% Therefore,
%\[      \Prob\{B_\tau \in \eta\}
%   \leq c \, e^{-L} \, \min\left\{
%  \Prob\{B_\tau \in \p_-\},\Prob\{
% B_\tau \in \p_+\} \right\}. \]
Combining this with \eqref{jan25.4}, we get \eqref{apr6.5}.
\end{proof}

\subsubsection{An estimate for two-sided radial}   \label{moredetersec}

Recall that $\psi_{n,k}$ is the first time after $\rho_n$ that
the curve $\gamma$ intersects $l^*_{n,k}$, the crosscut defining
$V_{n,k}^*$. 
%
%\begin{lemma}  There exists $c <\infty  $ such that if
%$0 < k < n$ and $\eta = l^*_{n,k}$, then
%\[     h_{H_n \setminus \eta}
%          (0,\eta) \leq c \, S_{H_n}(0).\]
%\end{lemma}
%
%
%\begin{proof}
%Let  $U^*$ be as in the proof of Lemma
%\ref{deterlemma}.  Since $\eta$ disconnects $-1$ from $0$,
%we can see that when we write
%\[  \p U^*  = \p \Disk_{n+k} \cup \overline \eta \cup
%             \p_- \cup \p_+, \]
%then $\p_- \subset \p_- U, \p_+ \subset \p_+U$ (or the other
%way around).  We also have a universal lower bound on
%$ h_{H_n \setminus \eta}
%          (0,\eta)$.  Hence the estimate follows from
%Lemma \ref{may18.newlemma} and \eqref{may6.1.new}.
%\end{proof}

\begin{lemma}   \label{may18.lemma1}
 There exists $c < \infty$ such that if
$0 < k < n$ and $\psi = \psi_{n,k}^* < \infty$, then
\[    S_{D_\psi}(0) \leq  c\, e^{(k-n)/2}\, S_{H_n}(0) \]
\end{lemma}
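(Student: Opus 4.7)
The strategy is to apply Lemma \ref{deterlemma} to the initial sub-curve $\gamma[0,\rho_n]$ and then translate the resulting diameter bound into the desired bound on $S_{D_\psi}(0)$. Taking the parameters in Lemma \ref{deterlemma} to be $(n-k,k)$ in place of $(n,k)$, the hypotheses are satisfied because $|\gamma(\rho_n)| = e^{-n} = e^{-((n-k)+k)}$ and $e^{-n} < |\gamma(t)| < 1$ for $0 < t < \rho_n$. Let $\tilde F : H_n \to \Disk$ be the conformal map with $\tilde F(0) = 0$ and $\tilde F(\gamma(\rho_n)) = 1$ (that is, $g_{\rho_n}$ composed with a rotation). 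Applying the lemma to the crosscut $\eta = l^*_{n,k} \subset \p \Disk_k$ yields
\[
\diam\bigl[\tilde F(l^*_{n,k})\bigr] \leq c\, e^{-(n-k)/2}\,(\theta_2-\theta_1)\,\min\{|V_1|,|V_2|\},
\]
where $V_1,V_2$ are the two arcs of $\p\Disk\setminus\{1\}$ adjacent to $1$ determined by the endpoints of $\tilde F(l^*_{n,k})$ and $V_3$ is the remaining arc.

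Next I would bound $\min\{|V_1|,|V_2|\}$ by $S_{H_n}(0)$. Because $l^*_{n,k}$ separates $0$ from $w$ in $H_n$, the image $w' := \tilde F(w) = e^{2i\theta'}$ lies in $V_3$; assuming (WLOG, by reflection) $\theta'\in(0,\pi)$ so that $S_{H_n}(0) = \sin\theta'$, and writing the endpoints of $\tilde F(l^*_{n,k})$ as $e^{i\phi_1},e^{i\phi_2}$ with $\phi_1 < 2\theta' < \phi_2$, one computes $|V_1|=\phi_1$ and $|V_2|=2\pi-\phi_2$. Hence $\min\{|V_1|,|V_2|\} \leq 2\min\{\theta',\pi-\theta'\} \leq \pi\sin\theta' = \pi\, S_{H_n}(0)$. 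Combined with the trivial bound $\theta_2-\theta_1 \leq 2\pi$, this yields
\[
\diam\bigl[\tilde F(l^*_{n,k})\bigr] \leq C\, e^{-(n-k)/2}\, S_{H_n}(0).
\]

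To finish, I would convert this diameter bound into a bound on $S_{D_\psi}(0)$. The image $z^* := \tilde F(\gamma(\psi))$ lies in $\tilde F(l^*_{n,k})$, and thus within $\diam[\tilde F(l^*_{n,k})]$ of each of the two endpoints $p_1,p_2\in\p\Disk$ of $\tilde F(l^*_{n,k})$. By the defining property of $\psi = \psi_{n,k}^*$ as the first return to $l^*_{n,k}$, the continuation $\tilde\gamma := \tilde F(\gamma[\rho_n,\psi])$ meets $\tilde F(l^*_{n,k})$ only at $z^*$. Using the characterization \eqref{may6.1.new}, the desired inequality $S_{D_\psi}(0) \leq c\, e^{(k-n)/2}\, S_{H_n}(0)$ reduces to showing that one of $h_{D_\psi}(0,\p_\psi^+), h_{D_\psi}(0,\p_\psi^-)$ is dominated by $C\,\diam[\tilde F(l^*_{n,k})]$. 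This would follow from a rectangle/Beurling argument of the type used in Section \ref{extremesec}, applied to the subdomain of $\Disk$ bounded by $\tilde F(l^*_{n,k})$ and the short arc of $\p\Disk$ containing the endpoints $p_1,p_2$ and the point $1$.

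The cleanest steps are the first two. I expect the main obstacle to be the final harmonic-measure comparison: one must correctly identify which side of $\p D_\psi$ receives small harmonic measure, keep track of the topology of the pocket in the presence of $\tilde\gamma$, and carry out the extremal-length comparison. This parallels in structure the rectangle argument at the end of the proof of Lemma \ref{deterlemma}, now applied to the near-closed configuration produced by the curve returning from $\p\Disk_n$ to $\p\Disk_k$.
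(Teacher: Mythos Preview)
Your first three steps are correct and amount to the paper's first half: both show that the image of $l^* = l^*_{n,k}$ under the normalizing map is small relative to $S_{H_n}(0)$. You package this as a diameter bound via Lemma~\ref{deterlemma}; the paper instead invokes Lemma~\ref{may18.newlemma} directly (together with the observation that $\p_\pm U^* \subset \p_\pm H_n$ because $l^*$ separates $0$ from $w$) to bound the harmonic measure $h_{H_n\setminus l^*}(0,l^*)$. These two intermediate bounds are interchangeable.

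The final step, however, is not carried out, and your description of the relevant subdomain is garbled: the arc of $\p\Disk$ containing $p_1,p_2$ and the point $1$ is $V_1\cup\{1\}\cup V_2$, which is not short; it is the complementary arc $V_3$ that may be short, and it contains $w'$, not $1$. More importantly, no rectangle or Beurling argument is needed here at all. What you are missing is a clean topological observation: because $\psi$ is the \emph{first} time after $\rho_n$ that $\gamma$ meets $l^*$, one of the two arcs of $l^*\setminus\{\gamma(\psi)\}$---call it $l$---is a crosscut of $D_\psi$ that still separates $0$ from $w$. The component $V$ of $D_\psi\setminus l$ containing $0$ then has $\p V\cap\p D_\psi$ entirely on one side (either $\p_\psi^+$ or $\p_\psi^-$), so a Brownian motion from $0$ must cross $l$ before reaching the other side. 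Domain monotonicity ($D_\psi\subset H_n$, $l\subset l^*$) gives
\[
S_{D_\psi}(0) \;\leq\; c\,h_{D_\psi\setminus l}(0,l) \;\leq\; c\,h_{H_n\setminus l^*}(0,l^*),
\]
and this last quantity is $\leq C\,\diam[\tilde F(l^*)]$ since $\tilde F(l^*)$ lies in a ball of that radius centered on $\p\Disk$. Once $l$ is identified, the step is immediate; the extremal-length work was already absorbed into your first bound.
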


\begin{proof}  
Let $\eta = l^*_{n,k}$  and let
 $U^*$ be as in the proof of Lemma
\ref{deterlemma}.  Since $\eta$ disconnects $-1$ from $0$,
we can see that when we write
\[  \p U^*  = \p \Disk_{n+k} \cup \overline \eta \cup
             \p_- \cup \p_+, \]
then $\p_- \subset \p_- U, \p_+ \subset \p_+U$ (or the other
way around).  We also have a universal lower bound on
$ h_{H_n \setminus \eta}
          (0,\eta)$.  Hence from
Lemma \ref{may18.newlemma} and \eqref{may6.1.new}
we see that
\[   h_{H_n \setminus \eta}
          (0,\eta) \leq c \, e^{(k-n)/2}\,  S_{H_n}(0).\]

There is a crosscut $l$ of $D_{\psi}$
that  is contained in $l^*$, has one of its endpoints
equal to $\gamma(\psi)$, and such that $0$ is disconnected
from $-1$ in $D_{\psi}$ by $l$. If $V$ denotes the
connected component of $D_{\psi_n} \setminus l$ containing
the origin, then  $\p V
\cap \p D_{\psi}$ (considered as prime ends) is contained
in either $\p_+ D_{\psi}$ or $\p_- D_\psi$.   Therefore,
\[ 
  S_{D_\psi}(0)  \leq  c \,  h_{D_{\psi} \setminus l}(0,
l)  
  \leq c \, 
  h_{H_n \setminus \eta}
          (0,\eta) \leq c \, e^{(k-n)/2}\,  S_{H_n}(0).
\]
\end{proof}

%
%          
%Let $l' = l^*_{n,n-2}$.
%The Beurling
%estimate and the strong Markov property imply that the probability
%that a Brownian motion starting at the origin reaches $l$
%without leaving $D_\psi$ is $h_{D_{\psi_n} \setminus l'}(0,
%l') \, O(e^{-(n-k)/2})$. Therefore $  S_{D_\psi}(0) \leq  c\, e^{(k-n)/2}
% \, h_{D_{\psi_n} \setminus l'}(0,
%l')$.  But the previous lemma implies that 
%\[S_{H^n}(0) \geq c \,  h_{D_{\psi_n} \setminus l'}(0,
%l').\]

\labove \textsf
{\Heuristic  \begin{small}  
  This proof uses strongly the fact that $l^*$ separates $-1$
  from $0$ in $H_n$.   The reader may wish to draw some
pictures to see that for other crosscuts $l \in \arcs_{n,k}$,  
$S_{D_\psi}(0)$  need not be small.
 \end{small}}
\lbelow

\section{Schramm-Loewner evolution ($SLE$)}  \label{slesec}

Suppose $D$ is a simply connected domain with two distinct
boundary points $w_1,w_2$ and one interior point $z$.  There
are three closely related versions of $SLE_\kappa$
in $D$: chordal
$SLE_\kappa$ from $w_1$ to $w_2$; radial from $w_1$ to $z$;
and two-sided radial from $w_1$ to $w_2$ going through $z$.
The last of these can be thought of as chordal $SLE_\kappa$
from $w_1$ to $w_2$ conditioned to go through $z$.  All
of these processes are conformally invariant and are  
defined only up to increasing reparametrizations.  Usually
chordal $SLE_\kappa$ is parametrized using a ``half-plane''
or ``chordal'' capacity with
respect to $w_2$ and radial and two-sided radial $SLE_\kappa$
are defined with a radial parametrization with respect to
$z$, but this is only a convenience.   If the same parametrization
is used for all three processes, then they
are   mutually absolutely continuous with each other if one
stops the process at a time before which that paths separate
$z$ and $w_2$ in the domain.

We now give precise definitions.  For ease we will choose
$D = \Disk$, $z=0$, $w_1 = 1$ and $w_2 = w = e^{2i\theta_0}$
with $0 < \theta_0 < \pi$.  We will use a radial
parametrization.  We first define radial $SLE_\kappa$ (for which
the point $w$ plays no role in the definition) and then
define chordal $SLE_\kappa$ (for which the point $0$ is irrelevant
when one considers processes up to reparametrization but is
important here since our parametrization depends on this point)
and two-sided $SLE_\kappa$ in terms of radial.  The definition
using the Girsanov transformation is really just one example
of a general process of producing ``$SLE(\kappa,\rho)$
processes''.

  Let $h_t(x)$ be the solution of \eqref{cotloew} with
$h_0(x) = \theta_0$ and let
\begin{equation}  \label{feb4.1}
      X_t = h_t(w) - U_t , \;\;\;\; S_t = \sin X_t.
\end{equation}
Note that $S_t$ is the same as defined in
Section \ref{argsec} and
\begin{equation}  \label{jan30.0}
 h_t'(w) =  \exp\left\{-a\int_0^t \frac{ds}{S_s^2} \right\}. 
\end{equation}
Let
\[  \tau_\epsilon = \tau_\epsilon(w) =
 \inf\{t \geq 0: S_t = \epsilon\} , \]
\[        \tau = \tau_0(w) = \inf\{t\geq 0: S_t = 0 \}
 = \inf\left\{t: \dist(w,\Disk \setminus D_t) = 0
 \right\}. \]
 
\subsection{Radial $SLE_\kappa$}

If $\kappa >0$, then {\em radial $SLE_\kappa$ (parametrized
so that $g_t'(0) = e^{4t/\kappa}$)} is the solution of the
Loewner equation \eqref{radloew} or \eqref{cotloew}
with $a = 2/\kappa$ and
 $U_t = -B_t$ where $B_t$ is a
standard Brownian motion.  This definition does not reference
the point $w$.  However, if we define $X_t$ by
\eqref{feb4.1}, we have  
\[            dX_t = a \, \cot X_t \, dt + d B_t.\]

  Suppose that $(\Omega,\F,\Prob_0)$
is a probability space under which $X_t$ is a Brownian
motion.  Then, see Section \ref{radbessec},
 for each $\beta \in \R$ there is
a probability $\Prob_\beta$  such that
\[       B_{t,\beta} = X_t - \beta \int_0^t \cot X_s \, ds
,
\;\;\;\; 0 \leq s < \tau , \]
is a standard Brownian motion. In other words,
\[      dX_t = \beta \, \cot X_t \, dt + dB_{t,\beta}. \]
 In particular, $B_t =
B_{t,a}$.  
We call this radial $SLE_\kappa$ {\em weighted locally by
$S_t^{\beta - a}$}, where $S_t = \sin X_t$.
Radial $SLE_\kappa$ is obtained by choosing $\beta = a$. 
Using \eqref{jan30.0}
we can write the local martingale in \eqref{jan30.1}
as
\[  M_{t,\beta} = S_t^{\beta} \, e^{t\beta^2/2}
  \, h_t'(w)^{\frac{\beta(\beta -1)}{2a}}.\]

We summarize the discussion in Section \ref{radbessec}
as follows.  If $\sigma$ is a stopping time, let
$\F_\sigma$ denote the $\sigma$-algebra generated
by $\{X_{s \wedge \sigma}: 0 \leq s < \infty\}$.

\begin{lemma}  Suppose $\sigma$ is a  stopping
time with $\sigma \leq 
\tau_\epsilon$ for some $\epsilon > 0$.
Then
the measures
  $\Prob_\alpha$ and $\Prob_\beta$  are mutually
absolutely continuous
 on $(\Omega,\F_\sigma)$.
 More precisely, 
  if $t_0 < \infty$, there exists $c =
c(\epsilon, t_0,\alpha,\beta) < \infty$ such that if $\sigma \leq
\tau_\epsilon \wedge t_0$, 
\begin{equation}\label{feb4.3}
   \frac 1c \leq  \frac{d\Prob_\alpha}
    {d \Prob_\beta}  \leq c .
    \end{equation}
 \end{lemma}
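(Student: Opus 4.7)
The plan is to read off the result directly from the Girsanov construction of $\Prob_\beta$ already recorded in Section \ref{radbessec}, combined with the boundedness of the exponential/Poisson-type corrector on $\{t \leq t_0 \wedge \tau_\epsilon\}$.

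First I would recall that, in the notation set up in Section \ref{radbessec}, the measure $\Prob = \Prob_0$ (taking $\beta = 0$ gives $M_{t,0} \equiv 1$), and the defining relation
\[
  d\Prob_\beta = \frac{M_{t_0 \wedge \tau_\epsilon,\beta}}{M_{0,\beta}} \, d\Prob,
  \qquad
  M_{t,\beta} = (\sin X_t)^\beta \, e^{\beta^2 t/2} \,
    \exp\!\left\{ \tfrac{(1-\beta)\beta}{2} \int_0^t \frac{ds}{\sin^2 X_s} \right\},
\]
is in fact a bona fide probability measure because $M_{t,\beta}$ is a local martingale (as displayed in \eqref{jan30.1}), and the stated two-sided bound $C^{-1} \leq M_{t,\beta} \leq C$ on $[0, t_0 \wedge \tau_\epsilon]$ makes it a true martingale up to that time. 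Since $\sigma$ is a bounded stopping time with $\sigma \leq t_0 \wedge \tau_\epsilon$, optional sampling gives the restriction
\[
  \left. \frac{d\Prob_\beta}{d\Prob} \right|_{\F_\sigma}
   = \frac{M_{\sigma,\beta}}{M_{0,\beta}},
\]
and the analogous identity with $\beta$ replaced by $\alpha$.

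Second, taking the ratio of these two Radon-Nikodym derivatives yields
\[
  \left. \frac{d\Prob_\alpha}{d\Prob_\beta} \right|_{\F_\sigma}
   = \frac{M_{\sigma,\alpha} / M_{0,\alpha}}{M_{\sigma,\beta} / M_{0,\beta}},
\]
which is well-defined because the denominator is strictly positive ($\sin X_t > 0$ on $[0,\tau)$ and $\sigma < \tau$). To bound this ratio I would simply invoke the uniform bound already quoted after \eqref{jan30.1}: for each $\gamma \in \{\alpha,\beta\}$ there is a constant $C_\gamma = C(\gamma,\epsilon,t_0)$ such that $C_\gamma^{-1} \leq M_{t,\gamma} \leq C_\gamma$ for every $0 \leq t \leq t_0 \wedge \tau_\epsilon$. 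Since $X_0 \in [\epsilon, \pi - \epsilon]$ (forced by $\sigma \leq \tau_\epsilon$ having positive probability), $M_{0,\alpha}$ and $M_{0,\beta}$ are likewise bounded above and away from $0$, so both numerator and denominator lie in $[C^{-1}, C]$ for some $C = C(\epsilon, t_0, \alpha, \beta)$, giving \eqref{feb4.3}.

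There really is no substantive obstacle: the only thing to check carefully is that the bounds on $M_{t,\gamma}$, stated for the deterministic time $t_0 \wedge \tau_\epsilon$, apply at the random time $\sigma$ as well, which is immediate since they hold pointwise on the whole interval $[0, t_0 \wedge \tau_\epsilon] \supset [0,\sigma]$. The mutual absolute continuity statement follows at once from the two-sided bound on the Radon-Nikodym derivative, so no separate argument is required.
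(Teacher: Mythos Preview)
Your proposal is correct and matches the paper's approach exactly: the paper does not give a separate proof of this lemma but simply introduces it with ``We summarize the discussion in Section~\ref{radbessec} as follows,'' meaning the result is to be read off from the Girsanov construction and the two-sided bound $C^{-1}\le M_{t,\beta}\le C$ on $[0,t_0\wedge\tau_\epsilon]$ already recorded there. One small wording quibble: your parenthetical ``forced by $\sigma\le\tau_\epsilon$ having positive probability'' is not quite the right justification---if $\sin X_0\le\epsilon$ then $\tau_\epsilon=0$, hence $\sigma=0$ and the statement is trivial; otherwise $\sin X_t\ge\epsilon$ throughout $[0,\tau_\epsilon]$ and the bounds on $M_{t,\gamma}$ (including at $t=0$) follow directly.
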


 \labove \textsf
{\Heuristic  \begin{small}  
Clearly we can give more precise estimates for the Radon-Nikodym derivative,
but this is all we will need in this paper.
 \end{small}}
\lbelow

Different values of $\beta$ given different processes;
chordal and two-sided radial $SLE_\kappa$ correspond
to particular values.

 \subsection{ Chordal $SLE_\kappa$: $\beta = 1-2a$}
{\em Chordal
$SLE_\kappa$ (from $1$ to $w$ in $\Disk$ in the radial
parametrization stopped at time $T_w$)} is obtained from
radial $SLE_\kappa$ by weighting
locally by   $S_t^{1-3a}$.
In other words,
\begin{equation}  \label{chordSDE}
   dX_t =  (1-2a) \, \cot X_t \, dt + dB_{t,1-2a}, 
\end{equation}
where $B_{t,1-2a}$ is a Brownian motion with respect to
$\Prob_{1-2a}$.

This is not the usual way chordal $SLE_\kappa$ is defined so let
us relate this to the usual definition.  $SLE_\kappa$ from $0$
to $\infty$ in $\Half$ is defined by considering the Loewner
equation
\[             \p_t g_t(z) = \frac{a}{g_t(z) - U_t} ,\]
where $U_t = -B_t$ is a standard Brownian motion.  There
is a random curve $\gamma:[0,\infty) \rightarrow \overline
\Half$ such that the domain of $g_t$ is the unbounded component
of $\Half \setminus \gamma(0,t]$.  $SLE_\kappa$ connected
boundary points of other simply connected domains is defined
(modulo time change) by conformal transformation.  One can use
It\^o's formula to check that our definition agrees (up to
time change) with the usual definition.

If $D$ is a simply connected domain and $w_1,w_2$ are boundary
points at which $\p D$ is locally smooth, the chordal
$SLE_\kappa$
partition function is defined (up to an unimportant multiplicative
constant) by
\[         H_\Half(x_1,x_2) = |x_2-x_1|^{-2b}, \]
and the scaling rule
\[       H_D(w_1,w_2) = |f'(w_1)|^b \, |f'(w_2)|^b
  \, H_{f(D)}(f(w_1),f(w_2)) , \]
where $b=(3a-1)/2$ is the boundary scaling exponent.  
To obtain $SLE_\kappa$ from $0$ to $x$ in $\Half$ one can take
$SLE_\kappa$ from $0$ to $\infty$ and then weight locally by the
value of the partition function between $g_t(x)$ and $U_t$, i.e.,
by $|g_t(x) - U_t|^{-2b}$.   A simple computation
shows that
\[   H_\Disk(e^{2i\theta_1},e^{2i\theta_2})
   =   |\sin (\theta_1 - \theta_2)|^{-2b}
 =|\sin (\theta_1 - \theta_2)|^{1-3a}.\]  
Hence we see that chordal $SLE_\kappa$ in $\Disk$ is obtained
from radial $SLE_\kappa$ by weighting locally by the chordal
partition function.

\subsection{Two-sided radial $SLE_\kappa$: $\beta = 2a$} \label{twosec}

If $\kappa < 8$,  {\em
Two-sided radial $SLE_\kappa$ (from $1$ to $w$ in $\Disk$ going
through $0$ stopped when it reaches $0$)} is obtained by
weighting chordal $SLE_\kappa$ locally by $(\sin X_t)^{(4a-1)}$.
Equivalently, we can think of this as weighting
radial $SLE_\kappa$ locally by $(\sin X_t)^a$.
It should be considered as chordal $SLE_\kappa$ from $1$
to $w$ conditioned to go through $0$.  

\labove \textsf
{\Heuristic  \begin{small}  
If $\kappa \geq 8$, $SLE_\kappa$ paths are plane-filling and hence
conditioning the path to go through a point is a trivial conditioning.
For this reason, the discussion of two-sided radial is restricted
to $\kappa < 8$. 
 \end{small}}
\lbelow

The definition comes from the Green's function for chordal
$SLE_\kappa$.  If $\gamma$ is a chordal $SLE_\kappa$ curve
from $0$ to $\infty$ and $z \in \Half$, let $R_t$ denote
the conformal radius of the unbounded component of $\Half \setminus
\gamma(0,t]$ with respect to $z$, and let
$R = \lim_{t \rightarrow \infty} R_t.$ The Green's function
$G(z) = G_\Half(z;0,\infty)$ can
be defined (up to multiplicative constant) by the relation
\[               \Prob\{R_t \leq \epsilon\}
    \sim c\,  \epsilon^{2-d} \, G(z), \;\;\;\;
  z \rightarrow \infty. \]
where $d = \max\{1 + \frac \kappa 8,2\}$ is the Hausdorff dimension
of the paths. Roughly speaking, the probability that a chordal
$SLE_\kappa$ in $\Half$ from $0$ to $\infty$ gets within distance
$\epsilon$ of $z$ looks like $c \, G(z) \, \epsilon^{2-d}$.
 For other simply connected domains, the Green's
function is obtained by conformal covariance
\[            G_D(z;w_1,w_2) =
    |f'(z)|^{2-d}
  \, G_{f(D)}(f(z);f(w_1),f(w_2)), \]
assuming smoothness at the boundary.  In particular, one can show that
(up to an unimportant multiplicative constant)
\[     G_\Disk(0;1,e^{i\theta}) = 
     (\sin \theta)^{4a-1}  , \;\;\;\; \kappa < 8.
 \]
 
%For all bounded stopping times $\sigma$, the distribution of two-sided
%radial $SLE_\kappa$ is absolutely continuous with respect to radial
%$SLE_\kappa$ with Radon-Nikodym derivative
%\[           (\sin X_\sigma)^a \, e^{3a^2\sigma/2}
%  \, |g_\sigma'(-1)|^{\frac{3a-1}{2}}, \]
%assuming that $|g_\sigma'(-1)| > 0$.  (If $g_\sigma'(-1) = 0$, then
%the curve $\gamma(0,\sigma]$ disconnects the origin from $-1$ and
%the Radon-Nikodym derivative equals zero.) 

 \section{Proofs of main results}

\subsection{Continuity of radial $SLE$}  \label{lemmasec}

The key step to proving continuity of radial $SLE_\kappa$
is an extension of  an estimate
for chordal $SLE_\kappa$ to radial $SLE_\kappa$. 
%Suppose $\eta$ is a crosscut of $\Half$ which does not
%separate the origin from infinity, and let $H_\eta$ be
%the unbounded component of $\Half \setminus \eta$.  Let
%$\Phi = \Phi_\eta: H_\eta \rightarrow \Half$ be a conformal 
%transformation with $\Phi(\infty) = \infty, \Phi'(\infty)
%=1$.  Then the probability that a Brownian excursion from $0$
%to $\infty$ in $\Half$ stays in $H_\eta$ is $\Phi'(0)$. 
%If $\diam(\eta) \leq \dist(0,\eta)/2$, then
%\[    \Prob\{\mbox{excursion hits } \eta\}
% =        1  - \Phi'(0) \asymp  \left[\frac{\diam(\eta)}{
%    \dist(0,\eta)}\right]^2.\]
The next lemma gives the analogous estimate for chordal
$SLE_\kappa$; a proof can be found in \cite{AK}.
Recall that $\alpha = (8/\kappa) - 1$.

\begin{lemma}  For every $ 0 < \kappa < 8$, there
exists $c < \infty$ such that  if
 $\eta$ is a crosscut in $\Half$ and $\gamma$
is a chordal  $SLE_\kappa$ curve
 from $0$ to $\infty$ in $\Half$, then
\begin{equation} \label{chordest}
 \Prob\{\gamma(0,\infty) \cap \eta \neq \emptyset\} \leq 
   c \,\left[
\frac{\diam ( \eta)}{\dist(0,\eta)}\right]^{
\alpha}. 
\end{equation} 
\end{lemma}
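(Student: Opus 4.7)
\noindent\emph{Plan.}
By the scale invariance of chordal $SLE_\kappa$ in $\Half$ about $0$, I may assume $\dist(0,\eta)=1$ and set $s=\diam\eta$; the goal becomes $\Prob\{\gamma\cap\eta\neq\eset\}\le c\,s^\alpha$, and I may further assume $s\le 1/2$ (otherwise the bound is trivial by taking $c$ large). If $\eta$ separates $0$ from $\infty$ in $\Half$, the endpoints of $\eta$ on $\R$ must straddle $0$, which forces $s\ge 2\dist(0,\eta)=2$ and contradicts $s\le 1/2$; so this case does not occur. Thus both endpoints of $\eta$ lie on a common component of $\R\setminus\{0\}$: WLOG in $(0,\infty)$ at points $a<b$ with $a\asymp 1$, $b-a\le s$. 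Since $a\in\overline\eta$ and $\diam\eta=s$, we have $\eta\subset\overline{B(a,s)}\cap\Half$, and therefore
\[
 \{\gamma\cap\eta\neq\eset\}\;\subset\;\{\gamma\cap(\overline{B(a,s)}\cap\Half)\neq\eset\}.
\]

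It then suffices to apply the standard chordal-$SLE_\kappa$ hull-hitting estimate
\[
 \Prob\{\gamma\cap K\neq\eset\}\;\le\;c\,\bigl[\hcap K\,\bigm/\,\dist(0,K)^{\,2}\bigr]^{\alpha/2}
\]
to the hull $K=\overline{B(a,s)}\cap\Half$. For this $K$ one has $\hcap K=s^2/2$ by a standard computation for half-disks, and $\dist(0,K)=a-s\asymp 1$; the right-hand side is therefore $\le c\,s^{\alpha}$, as required.

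The hull-hitting bound itself is classical, proved by an $SLE_\kappa$ local-martingale argument. One introduces the hydrodynamic conformal map $\phi_t:\Half\setminus g_t(K)\to\Half$ and, via It\^o's formula applied to the chordal Loewner SDE for $U_t$ together with the evolution of $\phi_t'(U_t)$ and $\phi_t(U_t)$, constructs a positive local martingale whose initial value is of order $[\hcap K/\dist(0,K)^{2}]^{\alpha/2}$ and which remains bounded below by a positive constant on the event $\{\gamma\cap K\neq\eset\}$; optional stopping then yields the bound. The main obstacle in this last step is the identification of the correct exponents in the martingale (equivalently, of the $SLE(\kappa,\rho)$-type weighting corresponding to conditioning the path to hit $K$), and, for $\kappa>4$, the control of the local martingale through the possible swallowing of $K$, which is handled by a truncation at a small distance from $K$ combined with Koebe/Beurling-type distortion estimates.
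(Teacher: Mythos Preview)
The paper does not prove this lemma; it simply cites it as a known result from \cite{AK} (the Alberts--Kozdron estimate for the probability that chordal $SLE_\kappa$ intersects a boundary semicircle). Your reduction is precisely the right way to pass from their semicircle estimate to the crosscut statement: the scaling, the elimination of the separating case via the observation that straddling endpoints force $\diam\eta\ge 2\,\dist(0,\eta)$, and the enclosure $\eta\subset\overline{B(a,s)}\cap\Half$ with $a\asymp 1$ are all correct and clean.

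The one point to flag is your formulation of the ``standard chordal-$SLE_\kappa$ hull-hitting estimate''
\[
 \Prob\{\gamma\cap K\neq\eset\}\;\le\;c\,\bigl[\hcap K\bigm/\dist(0,K)^{2}\bigr]^{\alpha/2}
\]
for a general hull $K$. I do not recognize this in the literature in that form, and your final paragraph is more of a programme than a proof: the ``identification of the correct exponents'' and the $\kappa>4$ truncation are exactly the nontrivial content. For the specific hull you need, however, the claim reduces to
\[
 \Prob\bigl\{\gamma\cap(\overline{B(a,s)}\cap\Half)\neq\eset\bigr\}\;\le\;c\,(s/a)^{\alpha},
\]
and this is exactly the Alberts--Kozdron result (they in fact give an explicit formula). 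So if you replace the general $\hcap$ statement by a direct appeal to the half-disk case, your argument is complete and coincides with what the paper intends when it invokes \cite{AK}.
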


We will prove the corresponding result
for radial $SLE_\kappa$.
We
start by establishing the estimate up to a fixed time
(this is the hardest estimate), and then extending
the result to infinite time.

\begin{lemma}  \label{groundhog.1}
 For every $t < \infty$, there exists $C_t <\infty$
such that the following holds. 
Suppose $\eta$ is a crosscut of $\Disk$
and   $\gamma$ is a radial $SLE_\kappa$ curve
from $1$ to $0$ in $\Disk$. Then
\[  \Prob\{\gamma(0,t] \cap \eta \neq \emptyset \}
  \leq C_t \, 
          \left[\frac{\diam (\eta)}{\dist(1,\eta)}\right]^{
\alpha}.\]
\end{lemma}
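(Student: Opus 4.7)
The plan is to deduce the lemma from the chordal estimate \eqref{chordest} via the absolute continuity relationship developed in Section \ref{radbessec}. First I may assume $r := \diam(\eta)/\dist(1,\eta)$ is smaller than any fixed constant (else the bound is trivial by taking $C_t$ large), so $\eta$ lies in a small disc bounded away from $1$. I pick a boundary target point $w = e^{2i\theta_0}\in\p\Disk$, depending on $\eta$, such that $|w-1|$ and $\dist(w,\overline\eta)$ are both bounded below by absolute constants. Then the Möbius map $F:\Disk\to\Half$ sending $1\mapsto 0$, $w\mapsto\infty$ has uniformly bounded distortion on a neighborhood of $\overline\eta$, so $\diam F(\eta)/\dist(0,F(\eta))\asymp r$, and \eqref{chordest} pulled back through $F$ gives
\[ \Prob^{\mathrm{ch}}\{\tilde\gamma\cap\eta\neq\eset\}\le C\,r^{\alpha} \]
for chordal $SLE_\kappa$ $\tilde\gamma$ from $1$ to $w$ in $\Disk$.

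Next I parametrize both processes radially so that the common driving angle $X_t = h_t(\theta_0)-U_t$ satisfies $dX_t = \beta\cot X_t\,dt + dB_t$ with $\beta = a$ (radial) or $\beta = 1-2a$ (chordal). Set $\sigma = \inf\{s:\gamma(s)\in\overline\eta\}$. The explicit Radon--Nikodym density from Section \ref{radbessec} takes the form
\[ \frac{d\Prob^{\mathrm{rad}}}{d\Prob^{\mathrm{ch}}}\bigg|_{\F_{\sigma\wedge t}} = (\sin X_0)^{1-3a}(\sin X_{\sigma\wedge t})^{3a-1}\,e^{c_0(\sigma\wedge t)}\,h_{\sigma\wedge t}'(w)^{(1-3a)/2} \]
with $c_0=(3a-1)(1-a)/2$; since this is a nonnegative $\Prob^{\mathrm{ch}}$-local martingale, optional sampling yields
\[ \Prob^{\mathrm{rad}}\{\sigma\le t\} \le (\sin X_0)^{1-3a}\,\E^{\mathrm{ch}}\!\bigl[1\{\sigma\le t\}\,(\sin X_\sigma)^{3a-1}\,e^{c_0\sigma}\,h_\sigma'(w)^{(1-3a)/2}\bigr]. \]

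The main obstacle is to show that this expectation is bounded by $C_t\,r^{\alpha}$. The plan is to show that on the bulk of $\{\sigma\le t\}$ the random factor $(\sin X_\sigma)^{3a-1}h_\sigma'(w)^{(1-3a)/2}$ is $O(1)$, after which bounding by $C_t\,\Prob^{\mathrm{ch}}\{\sigma\le t\}\le C_t'\,r^{\alpha}$ finishes. The geometric intuition is that at a typical hit of $\eta$, the choice of $w$ far from both $1$ and $\eta$ forces $\gamma[0,\sigma]$ to leave substantial harmonic mass on each side of the slit as seen from $0$, making $\sin X_\sigma$ bounded below; writing $h_\sigma'(w)=\exp(-a\int_0^\sigma ds/\sin^2 X_s)$ reduces control of the remaining factor to bounding the Lebesgue time $\sin X_s$ spends near $0$ on $[0,\sigma]$. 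For $\kappa<4$, Lemma \ref{cinco.lemma} supplies this tail bound directly, since then $\beta = a>1/2$. For $\kappa\in[4,8)$, where that Bessel lemma fails at $\beta = a$, I would dyadically decompose along the stopping times $\tau_{2^{-k}}=\inf\{s:\sin X_s\le 2^{-k}\}$, apply the strong Markov property at each level, and invoke the deterministic crosscut/extremal-length bounds of Lemma \ref{deterlemma} (together with the Beurling estimate) to show that the conditional probability of subsequently hitting $\eta$ before returning to $\sin X\ge 2^{-k+1}$ decays geometrically in $k$, then sum. Controlling this bad-event contribution in the $\kappa\ge 4$ regime is where I expect essentially all of the work.
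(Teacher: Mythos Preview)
Your plan has a real gap at the step where you claim the Radon--Nikodym factor
$(\sin X_\sigma)^{3a-1}\,h_\sigma'(w)^{(1-3a)/2}$ is $O(1)$ ``on the bulk of $\{\sigma\le t\}$''. The geometric intuition you offer is not correct: nothing forces $\gamma[0,\sigma]$ to leave comparable harmonic mass on both sides of the slit. Even with $w$ chosen far from $1$ and from $\overline\eta$, the curve is free to wander arbitrarily close to $w$ before it ever touches $\eta$, and on such an excursion $\sin X_s$ becomes small and $\int_0^\sigma ds/\sin^2 X_s$ becomes large. For $\kappa<6$ the exponent $(1-3a)/2$ is negative, so $h_\sigma'(w)^{(1-3a)/2}=\exp\bigl(\tfrac{a(3a-1)}{2}\int_0^\sigma ds/\sin^2 X_s\bigr)$ can be arbitrarily large on $\{\sigma\le t\}$. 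This is exactly the content of the warning after \eqref{feb4.3}: the radial and chordal measures are mutually absolutely continuous only on $\F_{\tau_\epsilon\wedge t_0}$, not on $\F_{\sigma\wedge t}$. Your appeal to Lemma~\ref{cinco.lemma} does not repair this, because the expectation you need to control is under $\Prob^{\mathrm{ch}}$, where the drift parameter is $\beta=1-2a<1/2$ for every $\kappa<8$, so that lemma does not apply. The dyadic plan you sketch for $\kappa\ge 4$ invokes Lemma~\ref{deterlemma} in a setting (levels of $\sin X$) that does not match its hypotheses (curves stopped at $\rho_{n+k}$, crosscuts on $\p\Disk_k$), and it is not clear how to make that transfer.

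The paper's proof sidesteps the unbounded-density problem altogether by refusing to compare measures at the uncontrolled time $\sigma$. Instead it fixes $w=-1$ and introduces a \emph{geometric} stopping time $\sigma_n=\inf\{s:\Re\gamma(s)\le e^{-2n}\}$; up to $\sigma_n$ the curve stays well away from $-1$, so $\sin X_s$ and $h_s'(-1)$ are bounded away from $0$ and the density is uniformly bounded (Claim~2). The price is that $\sigma_n$ may occur before the curve hits $\eta$, so one needs a renewal step: a deterministic estimate (Claim~3) shows that the conformal ratio $L_{\sigma_n}(\eta)/L^*_{\sigma_n}(\eta)$ does not exceed $L(\eta)/L^*(\eta)$, and since $\Prob\{\sigma_n\wedge\rho_n\ge t\}\ge u>0$ (Claim~1), one obtains the inequality $\phi(r)\le c\,r^\alpha+(1-u)\,\phi(r)$ for the supremum $\phi(r)$ of hitting probabilities, which solves to $\phi(r)\le c'\,r^\alpha$. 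The deterministic non-increase of the ratio at the carefully chosen stopping time is the idea your outline is missing.
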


%%%%%%%%%%%%%%%%%%%%%%
\begin{comment}

\begin{lemma}  For every $0 < \kappa < 8$,
there exists $c < \infty$ such that the following
is true.  Suppose $K \subset \Disk$ is a set such
such that $\Disk \setminus K$ is simply connected and contains
the origin.  Then the probability that an $SLE_\kappa$
from $1$ to $0$ in $\Disk$
  intersects $K$ is bounded above by $c$ times
\[          \left(\frac{\diam K}{\dist(1,K)}\right)^{
\alpha}.\]
where $\alpha = \frac 8 \kappa -1.$
\end{lemma}
\end{comment}
%%%%%%%%%%%%%%%%%%%%%%%%%%%%%%%%%
\begin{proof} 
Fix a positive integer $n$ sufficiently
large so
that $\gamma(0,t] \cap \Disk_n = \emptyset$. All
constants
in this proof may depend on $n$ (and hence on $t$).

Since $\dist(1,\eta) \leq 2$,  it suffices to prove the lemma 
for crosscuts satisfying
$\diam (\eta) < 1/100$ and $\dist(1,\eta) > 100 \, \diam (\eta)$.
Such crosscuts do not disconnect $1$ from $0$ in $\Disk$.

Let $V = V_\eta$ denote the connected component of $\Disk
\setminus \eta$ containing the origin, and let $F= F_\eta$
be a conformal transformation of $V$ onto $\Disk$
with $F(0) = 0$.
 We write
$\p V$ as a disjoint union:
\[  \p V = \{1\} \cup \eta[0,1] \cup \p_1 \cup \p_2 , \]
where $\p_1,\p_2$ are open connected subarcs
of $\p \Disk$.  Let
\[   L(\eta) =  \frac 1{2\pi} \, |F(\eta)| =
  h_{V_\eta}(0,\eta), \]
\[  
  L^*(\eta) =  \frac 1{2\pi}\, \min\left\{|F(\p_1)|,
 |F(\p_2)|\right\} = \min\left\{h_{V_\eta}(0,\p_1), h_{V_\eta}
  (0,\p_2) \right\},\]
where $|\cdot |$ denotes length.  Note that
\[    \diam(\eta) \asymp L(\eta)  \;\;\;\;\;
   \dist(1,\eta) \asymp L^*(\eta), \]
and hence we can write the conclusion of
the lemma as 
\begin{equation}  \label{jul10.5}
 \Prob\{\gamma(0,t] \cap \eta \neq \emptyset\} \leq 
   c \,\left[
\frac{L ( \eta)}{L^*(\eta)}\right]^{
\alpha},
\end{equation}
which is what we will prove.

Let $\gamma$ be a radial $SLE_\kappa$ curve.  If  
  $\gamma(0,t] \cap \eta = \emptyset$ and $\eta(0,1)
\subset D_t$,  let $V_t$ be the connected component
of $D_t \setminus \eta$ containing the origin
with corresponding maps $F_t$. We write
\[   \p V_t = \{\gamma(t)\} \cup \eta[0,1] \cup \p_{1,t}
  \cup \p_{2,t} \]
where the boundaries are considered in 
terms of prime ends.  
Let 
\[  L_t(\eta) =   \frac 1{2\pi}\, |F_t(\eta)|
  = h_{V_t}(0,\eta), \]
\[  L_t^*(\eta) =  \frac 1{2\pi}\,  \min\left\{|F_t(\p_{1,t})|,
 |F_t(\p_{2,t})|\right\} = \min\left\{h_{V_t}(0,\p_{1,t}),
 h_{V_t}
  (0,\p_{2,t}) \right\}. \] Note
that $L_t(\eta)$ decreases with $t$ but
$L_t^*(\eta)$ is not monotone in $t$.

As before, let $\rho = \rho_n$ be the first time $s$
that $|\gamma(s)| \leq e^{-n}$; our assumption on
$n$ implies that $\rho \geq t$.  Let $\sigma =
\sigma_n$ be the first time $s$ that 
  $\Re[\gamma(s)] \leq e^{-2n}$.   Our proof will include
a series of claims each of which will be proved after their
statement.

\begin{itemize}
\item {\bf Claim 1}. There exists 
 $u > 0$ (depending on $n$), such that
\begin{equation}  \label{feb6.1}
      \Prob\{\sigma \wedge \rho \geq t \}
  \geq u . 
\end{equation}
\end{itemize}

Deterministic estimates using the Loewner
equation
 show that  if $U_t$ stays sufficiently close to $0$, then
  $\rho < \sigma$. 
Therefore, since $\rho \geq  t$,
\[  \Prob\{  \sigma \wedge \rho > t\}
  \geq   \Prob\{ \rho < \sigma\} > 0. \]
 
 \medskip

\begin{itemize}
\item {\bf Claim 2}.
There exists $c < \infty$ such that 
\begin{equation}  \label{feb4.2}
%  \Prob\{\gamma(0,\sigma \wedge \rho] \cap \eta 
% \neq \emptyset
%  \}\leq
   \Prob\{\gamma(0,\sigma] \cap \eta \neq \emptyset\}
   \leq 
 c \,\left[
\frac{\diam (\eta)}{\dist(0,\eta)}\right]^{
\alpha} 
\end{equation}
%(the first inequality is immediate).
\end{itemize}

To show this we compare radial $SLE_\kappa$ from
$1$ to $0$ with chordal $SLE_\kappa$ from $1$ to $-1$. 
 Note by \eqref{newer} that 
   $\sigma$ is uniformly bounded. Straightforward
geometric  arguments show
that there exists $c $ (recall that constants
may depend on $n$)
such that
$c^{-1} \leq h_\sigma'(-1) \leq c$ and $\sin X_\sigma
\geq c^{-1}$.
 By \eqref{feb4.3} the  Radon-Nikodym
derivative of radial $SLE_\kappa$ with respect
to  chordal $SLE_\kappa$
%restricted to $\F_\sigma$ is  
%\[          (\sin X_\sigma)^{3a-1}
%             \, e^{\sigma(4a-3a^2-1)/2} \,
%           h_\sigma'(w)^{a - 3a^2}. \]
%In particular, it
 is uniformly bounded away from $0$ 
and $\infty$ and therefore
if $\tilde \gamma$ denotes a chordal $SLE_\kappa$ path from
$1$ to $-1$,
\[  \Prob\{\gamma(0,\sigma] \cap \eta = \emptyset \}
   \asymp \Prob\{\tilde \gamma(0,\sigma] \cap \eta = \emptyset\}. \]
Hence \eqref{feb4.2} follows from \eqref{chordest}.

\medskip 

\begin{itemize}
\item
{\bf Claim 3}.
There exists $\delta > 0$
such that if $L(\eta), L^*(\eta) \leq
 \delta$, then
on the event 
\begin{equation} \label{topcon1}
  \{\gamma(0,\sigma] \cap \eta = \emptyset\}, 
\end{equation}
we have 
\begin{equation}  \label{import}
  \frac{L_\sigma(\eta)}{L^*_\sigma(\eta)}
          \leq \frac{L(\eta)}{L^*(\eta) }.
\end{equation}
\end{itemize}

It suffices to consider $\eta$ with $L(\eta), L^*(\eta)
\leq 1/10$, and without
 loss of generality we assume that 
$\eta$ is ``above'' $1$ in the sense that its endpoints
are $e^{i\theta_1},e^{i\theta_2}$ with $0 < \theta_1 \leq
\theta_2 < 1/4$.  
Let $\gamma(\sigma) = e^{-2n} + i y , $ and let
$V_\sigma = V_{\eta,\sigma}$ be the connected component of $V
\setminus \gamma(0,\sigma]$ containing the origin.  Suppose
$\gamma(0,\sigma] \cap \eta = \emptyset$ and 
\begin{equation}  \label{topcon2}
\eta \subset V_\sigma, 
\end{equation}
(If $\eta \not\subset V_\sigma$, then $L_\sigma(\eta) = 0$.)
 %We will consider
%the intersection of the event \eqref{topcon1} with the event
%\begin{equation}  \label{topcon2}
% \{\eta \subset V\} . 
%\end{equation}
As before
we write
\[   \p V_\sigma = \{\gamma(\sigma)\} \cup \eta[0,1] \cup
\p_{1,\sigma} \cup \p_{2, \sigma} , \]
where we write $\p_{1,\sigma}$ for the component of the
boundary that includes $-1$.  Note that \eqref{topcon1}
and \eqref{topcon2} imply that  $\p_{1,\sigma}$
in fact contains $\{e^{i\theta}: \theta_2 < \theta <
3\pi/2\}$.
 Let $\ell$
denote the crosscut of $V_\sigma$ given by the vertical
line segment whose
lowest point is $\gamma(\sigma)$ and whose
highest point is on $\{e^{i\theta}: 0 < \theta < \pi/2\}$. 
 Note that $V_\sigma \setminus \ell$
has  two connected components,
one containing the origin and the other, which we denote
by $V^* = V^*_{\sigma,\eta}$, with $\eta \subset \p V^*$.
Let $\epsilon$ denote the length of $\ell$ and for the moment
assume that $\epsilon < 1/4$.  Topological considerations
using \eqref{topcon1} and \eqref{topcon2} imply that
all the points in $\p V^* \cap \{z \in \Disk:
e^{-2n} < \Re(z) < e^{-1} \}$ (considered as prime ends) are
in $\p_{2,\sigma}$. 

We consider another crosscut $\ell'$ defined as follows.
Let $x = e^{-2n} + \epsilon$.
Start at  $x + i \, \sqrt{1 - x^2}
  \in \p \Disk$ and take a vertical segment downward of length
$2\epsilon$ to the point $z' = x + i \, 
(\sqrt{1 - x^2}-2\epsilon).$  From $z'$ take a horizontal
segment to the left ending at $\{\Re(z) = e^{-2n}\}$.  This
curve, which is a 
concatentation of two line
 segments, must intersect the path $\gamma(0,\sigma]$ at
some point;  let $\ell'$  be the crosscut 
obtained by stopping this curve at the first
such intersection.
Let $V'$ be the connected component of $V^*\setminus \ell'$
that contains $\ell$ on its boundary.
The key observations are: $\dist(\ell,\ell') \geq c \epsilon,$
$\diam(\ell) = O(\epsilon), \diam (\ell')  = O(\epsilon)$ and
${\rm area}(V') = O(\epsilon^2)$.
 In particular
(see, e.g., \cite[Lemma 3.74]{LBook}) the $\pi$-extremal distance
between $\ell$ and $\ell'$ is bounded below by a positive
constant $c_1$ independent of $\epsilon$.

Let $B_t$ be a standard complex Brownian motion starting
at the origin and let
\[  \tau = \inf\{t: B_t \not\in V_\sigma\}, \;\;\;\;
   \xi = \inf\{t: B_t \in \ell'\}.\] 
Then using
Lemma \ref{may4.lemma1}
we see that
\begin{equation}  \label{may10.1}
     \min \left\{  \Prob\{B_\tau \in \p_{1,\sigma}\}
  , \Prob\{B_\tau \in \p_{2,\sigma}\}\right \} \geq c_2
\, \Prob\{\xi < \tau \}. 
\end{equation}
Also, we claim that
\[        \Prob\{B_\tau \in \eta \mid \xi < \tau\}
   \leq c \, \epsilon \, L(\eta) .\]
To justify this last estimate, note that $\dist(B_\xi,\p \Disk)
 = O(\epsilon)$.   It suffices to consider the
probability that a Brownian motion starting at $B_\xi$
hits $\eta$ before leaving $\Disk$.
 The gambler's ruin estimate implies
that the probability that a Brownian motion
starting at $B_\xi$
reaches $\{\Re(z) = 1/2\}$ before leaving $\Disk$
is $O(\epsilon)$.  Given that we reach  $\{\Re(z) = 1/2\}$,
the probability to hit $\eta$ before leaving $\Disk$ is
$O(L(\eta))$. 
Therefore,
\begin{equation}  \label{may10.2}
   \Prob\{B_\tau \in \eta\} \leq c \, 
  \epsilon \, L(\xi)\, \Prob\{\xi < \tau \}.
\end{equation}
By combining \eqref{may10.1} and \eqref{may10.2}, we see
  that we can choose $\epsilon_0 > 0$
such that for $\epsilon < \epsilon_0$,
\[    \Prob\{B_\tau \in \eta\} \leq
    L(\eta) \,   \min \left\{  \Prob\{B_\tau \in \p_{1,\sigma}\}
  , \Prob\{B_\tau \in \p_{2,\sigma}\} \right\} , \]
and hence
\begin{equation}  \label{jul10.1}
   L_\sigma(\eta) \leq L(\eta) \, L^*_\sigma(\eta)\;\;\;
  \mbox{ if } \;\;\; \epsilon \leq \epsilon_0.
\end{equation}

One we have fixed $\epsilon_0$, we note
there exists $c = c(\epsilon_0) > 0$ such that if $\epsilon
\geq \epsilon_0$,
\[ L_\sigma^*(\eta) =  \min \left\{  \Prob\{B_\tau \in \p_{1,\sigma}\}
  , \Prob\{B_\tau \in \p_{2,\sigma}\}\right \} \geq c_2
. \]   Indeed, to bound $\Prob\{B_\tau \in \p_{1,\sigma}\}$ from
below we consider Brownian paths starting at the origin that
leave $\p \Disk$ before reaching $\{z: \Re(z) \geq e^{-2n}\}$.
To bound $\Prob\{B_\tau \in \p_{2,\sigma}\}$  consider Brownian
paths in the disk that start at the origin, go through the crosscut
$l$ (defined using $\epsilon = \epsilon_0$), and then make a 
clockwise loop about $\gamma(\sigma)$ before leaving $\Disk$
and before reaching $\{\Re(z) \geq 1/2\}$.  Topological considerations
show that these paths exit $V_\sigma$ at $\p_{2,\sigma}$.
Combining this with \eqref{jul10.1} and the estimate
$L_\sigma(\eta) \leq L(\eta)$, we see that there
exists $c_1 >0 $ such that for all $\eta$,
\[   L_\sigma(\eta) \leq c_1^{-1}\, L(\eta) \, L^*_\sigma(\eta).\]
In particular, 
\[   \frac{L_\sigma(\eta)}{L^*_\sigma(\eta)}
   \leq  \frac{L(\eta)}{L^*(\eta)} \;\;\; \mbox{ if }
\;\;\; L^*(\eta) \leq c_1.\]\
From this
we conclude \eqref{import}. 
 
 \medskip

Fix $\delta$ such that \eqref{import} holds, and  let 
 $\phi(r)$ be the supremum of
\[         \Prob\{\gamma(0,t] \cap  \eta \neq
\emptyset \} \]
where the supremum is over all $\eta$ with 
\[ L(\eta)
  \leq r \, \min\{\delta,L^*(\eta)\}. \]

\medskip

\begin{itemize}
\item {\bf Claim 4}. 
If $r  < \delta$,
 then  $\phi(r)$ equals $\tilde \phi(r)$
which is defined to be  the
 supremum of
\[         \Prob\{\gamma(0,t] \cap  \eta \neq
\emptyset \} \]
where the supremum is over all $\eta$ with 
\[ L(\eta)  
 \leq \min\{\delta, r \, L^*(\eta)\}
 .\]
\end{itemize}

To see this, suppose $\eta$ is a curve with
$L(\eta) \leq r\delta,
 L^*(\eta) >\delta.$  Let $S$
be the first time $s$ such that $L_s^*(\eta) = \delta$.
Note that
$   S < \inf\{s: \gamma(s) \in \eta \}. $
Since $L_S(\eta) \leq L(\eta) \leq r\delta$, we see that
\[   \Prob\{\gamma(0,t] \cap \eta \neq \emptyset
\}
  \leq  \Prob\{\gamma(0,t] \cap \eta \neq \emptyset
 \mid S < \infty\} \leq \tilde \phi(r) . \]
This establishes the claim.

\medskip

To finish the proof of the lemma, suppose $r < \delta$.
Since $\tilde \phi(r) = \phi(r)$, we can find
a crosscut $\eta$  with $
 L(\eta) \leq r\, L^*(\eta)$ and $L^*(\eta) \leq \delta$ 
 such that
\[  \phi(r) = \Prob\{\gamma(0,t] \cap \eta
 \neq \emptyset\} .\]
(For notational ease we 
are assuming the supremum is obtained. 
We do not need to assume this, but could rather take
a sequence of crosscuts $\eta_j$ with
$ \Prob\{\gamma(0,t] \cap \eta_j
 \neq \emptyset \} \rightarrow \phi(r)$.)  
 Using Claim 3, we see that if
$\gamma(0,\sigma] \cap \eta = \emptyset$, then
\[   L_\sigma(\eta) \leq L(\eta) \leq r, \;\;\;\;
     L^*_\sigma(\eta) \leq L_\sigma(\eta). \]
Therefore,
\[   \Prob\{\gamma(0,t] \cap \eta \neq \emptyset \mid
           \gamma(0,\sigma \wedge \rho] \cap \eta= \emptyset \}
        \leq \phi(r) . \]
Hence, using \eqref{feb6.1},
\[  \phi(r) =  \Prob\{\gamma(0,t] \cap \eta  \neq \emptyset\}
\leq \Prob\{\gamma(0,\sigma \wedge \rho]
  \cap \eta  \neq \emptyset \} + (1-u) \, \phi(r), \]
which implies
\[  \phi(r) \leq \frac{1}{1-u} \, \Prob\{\gamma(0,\sigma \wedge \rho]
  \cap \eta  \neq  \emptyset \}.\]
 Combining this with \eqref{feb4.2}, we get
\begin{equation}  \label{feb6.2}
      \phi(r) 
    \leq \frac{c}{1-u} \, \left[\frac{L(\eta)}{L^*(\eta)}
   \right]^{\alpha} \leq c' \, r^\alpha.
\end{equation}

%Returning to the defintion of $\phi$, we see that we have shown
%that
%\[ 
%      \Prob\{\gamma(0,t] \cap \eta \neq \emptyset\}
%              \leq c' \, \left[\frac{L(\eta)}{L^*(\eta)}
%  \right]^\alpha , 
%\]
%provided that $L(\eta) < \delta \, L^*(\eta)$ and
%$L^*(\eta) < \delta$.   Therefore,
%\[ 
%      \Prob\{\gamma(0,t] \cap \eta \neq \emptyset\}
%              \leq (c' + \delta^{-\alpha})
% \, \left[\frac{L(\eta)}{L^*(\eta)}
%  \right]^\alpha  \;\;\; \mbox{ if } \,\;\;\;
%  L^*(\eta) < \delta . \]
% For $L^*(\eta) \geq \delta$, we can use the argument
%as in the proof of Claim 4 to see that
%\[ \Prob\{\gamma(0,t] \cap \eta \neq \emptyset\}
%              \leq (c' + \delta^{-\alpha})
% \, \left[\frac{L(\eta)}{\delta}
%  \right]^\alpha   \leq c'' \left[\frac{L(\eta)}{L^*(\eta)}
%  \right]^\alpha   .\] 

\begin{comment}
provided that $L(\eta) \leq \delta$ and $L(\eta) \leq \, (\delta/100)
\, L^*(\eta)$ or $L^*(\eta) \geq c_1$. 
 Since $L^*(\eta)$ is bounded from above, we see that 
\eqref{feb6.2}
holds for all $\eta$ with perhaps a different constant. 
\end{comment}

\end{proof}

\begin{proposition}  \label{groundhog.12}
If $0 < \kappa < 8,$ there exists $c < \infty$ such that  the
following holds. 
Suppose $\eta$ is a crosscut of $\Disk$
and   $\gamma$ is   a radial $SLE_\kappa$ curve
from $1$ to $0$ in $\Disk$. Then
\[  \Prob\{\gamma(0,\infty ) \cap \eta \neq \emptyset \}
  \leq c \, 
          \left[\frac{\diam (\eta)}{\dist(1,\eta)}\right]^{
\alpha}.\]
\end{proposition}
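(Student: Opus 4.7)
The plan is to bootstrap Lemma~\ref{groundhog.1} (finite time) to infinite time using the conformal Markov property of radial $SLE_\kappa$, partitioning time according to the exit times $\rho_n$ of $\Disk_n$. By \eqref{newer}, the time increments $\rho_{n+1}-\rho_n$ are uniformly bounded by some constant $c$, so each ``piece'' of the curve lies in a time window short enough to apply Lemma~\ref{groundhog.1} with a fixed constant $C_c$. Conditional on $\G_n$ and on the event $\{\gamma(0,\rho_n]\cap\eta=\emptyset,\ \eta\subset D_{\rho_n}\}$, let $\hat g_n$ be the unique conformal map $D_{\rho_n}\to\Disk$ with $\hat g_n(0)=0$ and $\hat g_n(\gamma(\rho_n))=1$. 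The conformal Markov property of radial $SLE_\kappa$ says that $\hat g_n(\gamma[\rho_n,\infty))$ is a fresh radial $SLE_\kappa$ from $1$ to $0$, and $\hat\eta_n:=\hat g_n(\eta)$ is a crosscut of $\Disk$ whose ratio satisfies
\[ \hat r_n:=\frac{\diam(\hat\eta_n)}{\dist(1,\hat\eta_n)}\asymp \frac{L_{\rho_n}(\eta)}{L^*_{\rho_n}(\eta)}, \]
using the comparison in \eqref{jul10.5}. Applying Lemma~\ref{groundhog.1} with $t=c$ to this fresh SLE gives, on the survival event,
\[ \Prob\{\gamma(\rho_n,\rho_{n+1}]\cap\eta\neq\emptyset\mid\G_n\}\leq C_c\,\hat r_n^{\alpha}. \]

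The first key point is that Claim~3 in the proof of Lemma~\ref{groundhog.1} is robust enough to show $\hat r_n$ does not grow along the Loewner flow: on the survival event we have $\hat r_{n+1}\leq \hat r_n$ up to a harmless multiplicative constant (since $L_{\rho_n}(\eta)$ is monotone decreasing in $n$, while the lower-harmonic-measure $L^*_{\rho_n}(\eta)$ is only mildly perturbed by a single unit time step of the flow). This reduces the problem to summing a convergent series.

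Second, to obtain summability we need \emph{strict} contraction of $\E[\hat r_{n+1}^\alpha\mid\G_n]$ rather than just the monotonicity above. The plan is to show there is a universal constant $\lambda<1$ such that on the survival event,
\[ \E\bigl[\hat r_{n+1}^{\alpha}\mid \G_n\bigr]\leq \lambda\,\hat r_n^{\alpha}. \]
The estimate is local in nature: over one unit of radial time the curve has positive probability (bounded below independently of $\eta$ and $n$) of making ``definite progress'' away from $\eta$ in the conformal image, via a rotation of order one around the origin or by swallowing a macroscopic region containing $\eta$. To quantify this one compares with chordal $SLE_\kappa$ using the absolute continuity \eqref{feb4.3} and then invokes the chordal hitting estimate \eqref{chordest}, which already encodes the correct $\alpha$-exponent and gives the required strict decrease once combined with the deterministic ratio-preservation of Claim~3.

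Combining these ingredients,
\[ \Prob\{\gamma(0,\infty)\cap\eta\neq\emptyset\}\leq \sum_{n\geq 0}\E\bigl[\Prob\{\gamma(\rho_n,\rho_{n+1}]\cap\eta\neq\emptyset\mid \G_n\}\bigr]\leq C_c\sum_{n\geq 0}\E[\hat r_n^{\alpha}]\leq \frac{C_c}{1-\lambda}\,r^{\alpha}. \]
The hardest step will be establishing the strict geometric contraction $\lambda<1$: merely knowing Claim~3 gives $\lambda\leq 1$, and the improvement to $\lambda<1$ requires a quantitative lower bound on the event that the Loewner flow genuinely pushes $\hat\eta_n$ away from $1$ in a fixed proportion of unit time windows, using the SDE analysis of Section~\ref{radbessec} to estimate how the argument $X_t$ moves between unit-scale levels.
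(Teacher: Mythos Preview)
Your decomposition into time windows $[\rho_n,\rho_{n+1}]$ and the use of the conformal Markov property together with Lemma~\ref{groundhog.1} on each window is exactly the skeleton of the paper's proof. The substantive difference is in how summability is obtained, and here your proposal has a real gap.

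You aim for a probabilistic step-by-step contraction $\E[\hat r_{n+1}^\alpha\mid\G_n]\leq\lambda\,\hat r_n^\alpha$ with $\lambda<1$, but you neither prove this nor give a mechanism that would yield it. Claim~3 of Lemma~\ref{groundhog.1} concerns the specific stopping time $\sigma$ (first time $\Re\gamma(t)\le e^{-2n}$), and its argument does not transfer to the times $\rho_n$ without new work; in particular $L^*_{\rho_n}(\eta)$ is not monotone, so even the non-expansion $\hat r_{n+1}\lesssim\hat r_n$ is not free. Your heuristic for the strict improvement (``positive probability of definite progress'') is not made precise and would in any case require you to rule out that the remaining probability mass produces large ratio increases.

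The paper bypasses all of this with a \emph{deterministic} input: after reducing to crosscuts $\eta$ disjoint from $\Disk_1$, Lemma~\ref{deterlemma} (via the Beurling estimate) gives, on the survival event, the pointwise bound
\[
\frac{L_{\rho_n}(\eta)}{L^*_{\rho_n}(\eta)}\;\le\;c\,e^{-n/2}\, L(\eta),
\]
so that $\hat r_n^\alpha\le c\,e^{-n\alpha/2}$ with no expectation or contraction argument needed. Plugging this into your own inequality $\Prob\{\gamma(\rho_n,\rho_{n+1}]\cap\eta\neq\emptyset\mid\G_n\}\le C_c\,\hat r_n^\alpha$ and summing the geometric series in $n$ finishes the proof. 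The missing idea in your proposal is precisely this use of Lemma~\ref{deterlemma} to get explicit exponential decay of the conformal ratio at time $\rho_n$ from geometry alone.
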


\begin{proof}     We may assume that $\eta \cap \Disk_1
= \emptyset.$ 
By Lemma \ref{deterlemma},  for $n \geq 5$,
conditioned on 
$\gamma[0,\rho_n] = \emptyset,$ we know that
\[       L_{\rho_n} \leq c \, L[\eta] \, e^{-n/2}
               L_{\rho_n}^* . \]
Since $\rho_{n+1} - \rho_n$ is uniformly bounded in
$n$, we can use Lemma \ref{groundhog.1} to conclude
that 
\[ \Prob\{ \gamma[0,\rho_5] \cap \eta
 \neq \emptyset \} \leq  c \, \left[\frac{L[\eta]}
  {L^*[\eta]} \right]^\alpha , \]
and for $n \geq 5$, 
\[    \Prob\{\gamma[0,\rho_{n+1}) \cap \eta \neq
\emptyset \mid \gamma[0,\rho_n] \cap \eta = \emptyset \}
   \leq c\, L[\eta]^\alpha \, e^{-n\alpha/2}
   \leq c \, e^{-n\alpha/2} \, \left[\frac{L[\eta]}
  {L^*[\eta]} \right]^\alpha . \]
By summing over $n$ we get the proposition.

\end{proof}

\begin{proof} [Proof of Theorem \ref{radmain}]
We start by proving the stronger result for $\kappa \leq 4$. 
%It suffices to show for each $n \geq 5$, 
%\[  \Prob\left\{\gamma[\rho_{n+k},\rho_{n+k+1}) \not\subset
%             \Disk_k  \mid \G_n\right\}
%\leq c \, e^{-n\alpha /2} 
%,\]
%for then the general result can be concluded by summing
%over $n$
 Note that
$\p \Disk_{k} \cap H_n $ is a disjoint union   of crosscuts 
$ \eta =\{e^{-k + i\theta}: \theta_{1,\eta} < \theta < \theta_{2,\eta}\}$.
  For each
$\eta$, we use Lemma \ref{deterlemma} and Proposition
\ref{groundhog.12} to see that
\[
   \Prob \left  
\{\gamma[\rho_{n+k},\infty) \cap
\eta \neq \emptyset  \mid \F_{\rho_n}
 \right\} \leq c \, e^{-n \alpha/2} \, (\theta_2-\theta_1)^
              {\alpha}. 
\]
However, since $\alpha \geq 1$ (here we use the
fact that $\kappa \leq 4$),
\begin{equation}  \label{may18.3}
  \sum_{\eta} (\theta_{2,\eta} - \theta_{1,\eta})^{\alpha}
  \leq \left[\sum_{\eta} (\theta_{2,\eta} - \theta_{1,\eta})\right]^\alpha
    \leq (2\pi  )^\alpha.
\end{equation}
 
 We will now prove \eqref{apr4.100} assuming only $\kappa < 8$.
  Let $E = E_{j,k,n}$ denote the
event   $\gamma[\rho_k,\rho_{n+k}] \subset \Disk_j$.
 Lemma \ref{jul19.lemma1} implies that
on the event $E$, there is a unique
crosscut $l \in {\mathcal A}_{n+k,k}$ such that every curve from
the origin to $\p \Disk_j$ in $H_{n+k}$ intersects $l$.  Hence,
on $E$
\[  \Prob\left\{\gamma[\rho_{n+k},\infty)  \not\subset \Disk_j
        \mid    \G_{{n+k}}  \right\} \]
is bounded above by the supremum of
\[  \Prob\left \{\gamma[\rho_{n+k}, \infty) \cap l \neq \emptyset
        \mid    \G_{{n+k}} \right \} , \]
where the supremum is over all $l \in {\mathcal A}_{n+k,k}$.
  For each
such crosscut $l$, we use Lemma \ref{deterlemma} and Proposition
\ref{groundhog.1} to see that  
\[
   \Prob \left  
\{\gamma[\rho_{n+k},\infty) \cap
l \neq \emptyset  \mid \G_{n}
 \right\} \leq c \, e^{-n \alpha/2}
              . 
\]
%Therefore,
%\[  \Prob\{\gamma[\rho_{k},\infty) \not \subset\Disk_j
% \mid \G_{n+k}
%\} \leq c' \, e^{-n\alpha/2} \, 1\{\gamma[\rho_{k},\rho_{n+k} ] \subset \Disk_j\}.\]
%
% on the event $E$,
%\[   \Prob\left\{\gamma[\rho_{n+k},\infty ] \not\subset \Disk_j 
%   \mid  \F_{\rho_{n+k}}  \right\}
%  \leq  c' \, e^{-n\alpha/2} \, 1\{. \]
\end{proof}

\subsection{Two-sided radial $SLE_\kappa$}  \label{tworadsec}

In order to prove that two-sided radial $SLE_\kappa$
is continuous at the origin, we will prove the following
estimate.  It is the analogue of Proposition \ref{groundhog.12}
restricted to the crosscut that separates the origin from
$-1$.

\begin{proposition}  \label{may10.prop1}
 If $\kappa < 8$ 
there exist $c'$ such if $\gamma$
is two-sided radial from $1$ to $-1$ through $0$ in
$\Disk$, then for all $k,n > 0$, if $l = l_{n+k.k}^*$,
\begin{equation}  \label{apr4.0}
 \Prob\{\gamma[\rho_{n+k},\infty) \cap \overline
  {l}  \neq \emptyset \mid \G_{n+k}
\} \leq c' \, e^{-n\alpha/2}.
\end{equation}
%Moreover, if $j < k$,
%\begin{equation}  \label{apr4.1}
% \Prob \left  
%\{\gamma[\rho_{k},\infty)   \subset \Disk_j 
%  \mid \G_{n+k}
% \right\} \geq \left[1 - c \, e^{-n \alpha/2}\right]
% \,  1\{\gamma[\rho_k,\rho_{k+n}] \subset \Disk_j \}.
%\end{equation}
\end{proposition}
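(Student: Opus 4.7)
The plan is to decompose the event $\{\psi^{*} < \infty\}$, where $\psi^{*}$ denotes the first time after $\rho_{n+k}$ that $\gamma$ hits $\overline{l}$ with $l = l^{*}_{n+k,k}$, according to which capacity interval $(\rho_{n+k+j},\rho_{n+k+j+1}]$ contains $\psi^{*}$. Setting $A_j = \{\rho_{n+k+j} < \psi^{*} \leq \rho_{n+k+j+1}\}$ for $j \geq 0$, I would bound $\Prob_{2a}(A_j \mid \G_{n+k})$ for each $j$ and then sum the resulting geometric series.

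The first step is a topological observation. On the event $\{\psi^{*} > \rho_{n+k+j}\}$, the crosscut $l$ has not been touched by $\gamma[\rho_{n+k},\rho_{n+k+j}]$, so its endpoints remain on $\gamma[0,\rho_{n+k+j}]$ and $l$ is a connected component of $\p\Disk_k \cap H_{n+k+j}$; moreover, $l$ continues to separate $0$ from the unswallowed target point $-1$ in $H_{n+k+j}$, so $l = l^{*}_{n+k+j,k}$. Applying Lemma \ref{may18.lemma1} at the scale $(n+k+j,k)$ then gives, on $A_j$, the deterministic bound $S_{\psi^{*}} \leq c_1 e^{-(n+j)/2} S_{\rho_{n+k+j}}$.

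The second step is probabilistic. Under $\Prob_{2a}$, the angular process $X_t$ satisfies the radial Bessel equation \eqref{radbessel} with $\beta = 2a > 1/2$ (since $\kappa < 8$), and $2\beta-1 = \alpha$. By \eqref{newer} there is a universal constant $c_0$ with $\rho_{n+k+j+1} - \rho_{n+k+j} \leq c_0$, so using the strong Markov property at $\rho_{n+k+j}$ together with Lemma \ref{cinco.lemma} (applied with $t_0 = c_0$ and $\epsilon = c_1 e^{-(n+j)/2}$),
\[
\Prob_{2a}(A_j \mid \G_{n+k+j}) \leq \Prob_{2a}\Bigl(\min_{\rho_{n+k+j} \leq t \leq \rho_{n+k+j+1}} S_t \leq c_1 e^{-(n+j)/2} S_{\rho_{n+k+j}} \;\Big|\; \G_{n+k+j}\Bigr) \leq c_2\, e^{-(n+j)\alpha/2}.
\]
Taking conditional expectation with respect to $\G_{n+k}$ preserves this bound, and summing over $j \geq 0$ yields
\[
\Prob_{2a}(\psi^{*} < \infty \mid \G_{n+k}) \leq \sum_{j \geq 0} c_2\, e^{-(n+j)\alpha/2} \leq \frac{c_2}{1 - e^{-\alpha/2}}\, e^{-n\alpha/2},
\]
which is \eqref{apr4.0}.

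The main obstacle is the first (topological) step: one must verify that the hitting time $\psi^{*}$ in the original filtration genuinely coincides with the ``$\psi$'' appearing in Lemma \ref{may18.lemma1} at every scale $(n+k+j,k)$. This is where the two-sided radial setting is crucial, since the target $-1$ is never swallowed and $l^{*}$ retains its separating property as the hull grows; it is also what lets us avoid the summation over many crosscuts that forced the restriction $\alpha \geq 1$ (i.e., $\kappa \leq 4$) in the radial proof of \eqref{jul20.1}.
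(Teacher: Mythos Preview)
Your argument is correct and is essentially the same as the paper's: the paper proves $\Prob\{\psi < \rho_{n+k+1} \mid \G_{n+k}\}\le c\,e^{-n\alpha/2}$ via Lemma~\ref{may18.lemma1} and Lemma~\ref{cinco.lemma}, then writes ``for then we can iterate and sum over $n$''---your decomposition into the events $A_j$ and the sum over $j$ is exactly that iteration written out explicitly. Your topological observation (that on $\{\psi^*>\rho_{n+k+j}\}$ the crosscut $l$ remains a crosscut of $H_{n+k+j}$ separating $0$ from the unswallowed point $-1$) is precisely what is needed to invoke Lemma~\ref{may18.lemma1} at each scale, and is implicit in the paper's iteration step.
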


\begin{proof} 
Let $\rho = \rho_{n+k}$ and as
 in   Lemma \ref{may18.lemma1}, let $\psi = \psi_{n+k.k}^*$
be the first time $t \geq \rho$ that $\gamma(t) \in \overline  l$.
It suffices to show that
\[    \Prob\{\psi<  \rho_{n+k+1} 
  \mid \G_{n+k}
\} \leq c \, e^{-n\alpha/2},\]
for then we can iterate and sum over $n$. By
Lemma \ref{may18.lemma1}, we know that 
\begin{equation}  \label{dec9.1}
   S_\psi(0) \leq c \, e^{-n/2} \, S_\rho(0) . 
   \end{equation}
Also,
\eqref{newer} gives $\rho_{n+k+1} -
\rho \leq c_1$ for some uniform $c_1 < \infty$.
Recalling that two-sided $SLE_\kappa$ corresponds to
the radial Bessel equation \eqref{jan30.1} with
$\beta = 2a$, we see from Lemma
\ref{cinco.lemma}, that
\[   \Prob\left\{\min_{\rho \leq t \leq  \rho +c_1}
         S_t(0) \leq \epsilon \, S_\rho(0) \mid \G_{n+k}\right\} \leq   c
               \epsilon^{4a - 1} = c \, \epsilon^{\alpha}.\]
Combining this with \eqref{dec9.1} gives the first inequality.
\end{proof}

\begin{proof}[Proof of Theorem \ref{twomain}]
To prove \eqref{apr4.1}, 
we recall Lemma \ref{jul19.lemma1}
which tells us that if $
\gamma[\rho_{k},\rho_{n+k} ] \subset \Disk_j$, then in order
for $\gamma[\rho_{n+k},\infty)$ to intersect $\Disk_j$ is is
necessary for it to intersect $\overline l$.
\end{proof}

\subsection{Proof of Theorem \ref{maincorollary}} 

Here we finish the proof of Theorem \ref{maincorollary}.
We have already proved the main estimates \eqref{apr4.100}
and \eqref{apr4.1}.  The proof is essentially the same
for radial and two-sided radial; we will do the two-sided
radial case.  We will use the following lemma which has
been used by a number of authors to prove exponential
rates of convergence, see, e.g., \cite{BFG}.  Since it is not very long,
we give the proof. An important thing to note about the
proof is that it does not give a good estimate for
the exponent $u$.

\begin{lemma}  \label{aprillemma}
Let $\epsilon_j$ be a decreasing sequence of
numbers in $[0,1)$ such that
\begin{equation}  \label{apr4.5}
           \limsup_{n \rightarrow \infty} \epsilon_n^{1/n} 
                            < 1. 
   \end{equation}  
 Then there exist $c,u$ such that the following holds.
Let  $X_n$ be  a discrete time Markov chain 
on state space $\{0,1,2\ldots\}$ with transition probabilities
\[   p(j,0) = 1-p(j,j+1)  \leq  \epsilon_j.\]
 Then, 
 \[            \Prob\{X_n < n/2 \mid  X _0 = 0 \} \leq c \, e^{-nu} . \]
\end{lemma}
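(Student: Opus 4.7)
The plan is to decompose the trajectory of $(X_n)$ into its excursions away from $0$ and to control the time at which the chain makes its last visit to $0$. Let $\tau = \inf\{n \geq 1 : X_n = 0\}$ be the first return time to $0$. Unrolling the Markov property gives
\[
\Prob\{\tau > m\} \;=\; \prod_{j=0}^{m-1}(1 - p(j,0)) \;\geq\; \prod_{j=0}^{m-1}(1-\epsilon_j).
\]
The hypothesis \eqref{apr4.5} implies that $\epsilon_n$ decays exponentially and in particular $\sum_j \epsilon_j < \infty$, so
\[
p^* \;:=\; \prod_{j=0}^\infty(1-\epsilon_j) \;>\; 0,
\]
and each excursion is infinite with probability at least $p^*$. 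Moreover, for finite $m$, $\Prob\{\tau = m\} \leq \epsilon_{m-1}$, which also decays exponentially.

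Let $\tau_1,\tau_2,\ldots$ denote the successive i.i.d.\ excursion lengths starting from $0$, let $K$ denote the number of finite excursions before the first infinite one, and set $T_K = \tau_1 + \cdots + \tau_K$, the time of the last visit to $0$. Writing $\psi(\theta) = \E[e^{\theta\tau}\, 1\{\tau<\infty\}]$, which is finite for small $\theta>0$ by the exponential tail of the finite part of $\tau$, the regenerative structure yields
\[
\E[e^{\theta T_K}] \;=\; p^* \sum_{k=0}^\infty \psi(\theta)^k \;=\; \frac{p^*}{1-\psi(\theta)}.
\]
Since $\psi(0) = 1 - p^* < 1$ and $\psi$ is continuous at $0$, the right-hand side is finite for all sufficiently small $\theta>0$, so Markov's inequality yields $\Prob\{T_K > n/2\} \leq c\, e^{-un}$ for some $c<\infty$ and $u>0$.

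To conclude, I will show that $\{X_n < n/2\} \subset \{T_K > n/2\}$. Indeed, if $s_n$ is the largest time at most $n$ at which $X_{s_n} = 0$, then by the dynamics the chain must increment by $1$ at every step after $s_n$ up to time $n$ (otherwise it would return to $0$ at some time in $(s_n,n]$, contradicting the definition of $s_n$), so $X_n = n - s_n$. Thus $\{X_n < n/2\}$ forces $s_n > n/2$, and since $T_K$ is the last time the chain ever visits $0$, $T_K \geq s_n > n/2$. Combining this inclusion with the exponential tail bound on $T_K$ gives the lemma. The argument presents no conceptual obstacle; it is a routine regenerative-process computation. The only feature worth noting is that the use of Markov's inequality gives no explicit value of $u$, consistent with the paper's inability to estimate the exponent in Theorem~\ref{maincorollary}.
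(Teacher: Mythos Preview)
Your proof is correct and follows essentially the same regenerative strategy as the paper: both control the generating function of the first-return time (your $\psi(\theta)$ is the paper's $F(e^\theta)$), use $\psi(0)<1$ together with continuity to find $\theta>0$ with $\psi(\theta)<1$, and conclude that the last visit to $0$ has an exponential tail. The paper packages the final step as $G(t)=[1-F(t)]^{-1}<\infty$ for some $t>1$, giving $\Prob\{X_n=0\}\leq ce^{-2un}$ and then summing over $n\geq n/2$, whereas you bound $\E[e^{\theta T_K}]$ directly and apply Markov's inequality---the same computation in slightly different clothing.
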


 \labove \textsf
{\Heuristic  \begin{small}  
 The assumption
that $\epsilon_j$ decrease is not needed since one can
always consider $\delta_j = \min\{\epsilon_1,\ldots,
\epsilon_j\}$ but it makes the coupling argument
described below easier.
 \end{small}} 
\lbelow

\begin{proof}  We will assume that $p(j,0) = \epsilon_j$.
The more general result can be obtained by a simple
coupling argument defining $(Y_n,X_n)$ on the same
space where 
\[  \Prob\{Y_{j+1} =0 \mid Y_j = n \}
  = 1 - \Prob\{Y_{j+1} = n+1 \mid Y_j = n \} = \epsilon_n,\]
in a way such that $Y_n \leq X_n$ for all $n$.

 Let $p_n = \Prob\{X_n = 0\mid X_0 = 0\}$, with
corresponding generating function
\[                  G(\xi) = \sum_{n=0}^\infty p_n \, \xi^n. \]
Let
\[ \delta =  \Prob\left\{X_n \neq 0 \mbox{ for all } n \geq 1 \mid
X_0 = 0\right\} = \prod_{n=0}^\infty [1-p(n,n+1)] 
 = \prod_{n=0}^\infty [1-\epsilon_n] > 0  . \]
For $n \geq 1$, let 
\[  \Prob\{X_n = 0; X_j \neq 0 , 1\leq j \leq n-1 \mid X_0 = 0\}\]
with generating function
\[   F(\xi) = \sum_{n=1}^\infty q_n \, \xi^n . \]
Note that
\[    q_n =  p(0,1) \, p(1,2) \, \cdots \, p(n-2,n-1) \, p(n-1,0) \leq  
  \epsilon_{n-1}. \]
  Therefore,  \eqref{apr4.5} implies that
  the radius of convergence of $F$ is strictly greater than $1$.  Since
  $F(1) = 1 - \delta < 1$, we can find $t > 1$ with $F(t) < 1$, 
  and hence
 \[            G(t) = [1-F(t)]^{-1} < \infty , \]
 In particular,
 if $e^{2u} < t$, then there exists $c< \infty$ such that for all $n$,
 \[                        p_n \leq c  \, e^{-2un}. \]
 Let $A_n$ be the event that $X_m = 0$ for some $m \geq n/2$.  Then,
 \[        \Prob(A_n) \leq \sum_{j \geq n/2} p_j \leq c' \, e^{-un}. \]
 But on the complement of $A_n$, we can see that $X_n \geq n/2$.

 \end{proof}

\begin{proof}[Proof of Theorem \ref{maincorollary}]
The proof is the same for radial or two-sided $SLE_\kappa$.
Let us assume the latter.  
% It suffices to show that for each $n$ with
%probability one there exists $m$ with $\gamma[\rho_m,\infty) \subset 
%\Disk_n$.
  The important observation is that
for every $0 < k < m< \infty$, we can find $\epsilon > 0$ such that for all $n$,
\[\Prob\{\gamma[\rho_{n+5+k},\rho_{n+m+5+k}] \subset \Disk_{n+5}  \mid 
\G_n\} \geq \epsilon.\]
(This can be shown by considering the event that the driving function stays
almost constant for a long interval of time after $\rho_{n}.$. We omit
the details.)  By combining this with Proposition
\ref{may10.prop1}, we can see that there exists $m,\epsilon$ such that
\begin{equation}  \label{dec9.2}
  \Prob \{\gamma[\rho_{n+m+5},\infty) \subset \Disk_{n+5}  \mid 
\G_n\} \geq \epsilon.
\end{equation}

To finish the argument, let us fix $k$. Let $c', u = \alpha/2$
be the constants from \eqref{apr4.1} and let $m$ be sufficiently
large so that $c' e^{-nu} \leq 1/2$ for $n \geq m$.
For positive integer $n$ define
$L_n$ to be the largest integer $j$ such that
\[                         \gamma[\rho_{n+k-j}, \rho_{n+k}] \subset  \Disk_k . \]
The integer $j$ exists but could equal zero.  From \eqref{apr4.1},
we know that 
\[             \Prob\left\{L_{n+k+1}  = L_{n+k}+ 1 \mid \G_{n+k} \right\}
                \geq   1 - c' \, e^{-nL_{n+k}u},\]
 and if $L_{n+k} \geq m$, the right-hand side is greater than $1/2$.
%Let $m$ be sufficient large so that $c' e^{-nu} \leq 1/2$.  By considering
%an event where the driving function is almost constant (details omitted)
%we can find $\epsilon > 0$ such that for all $n$,
%\[    \Prob\left\{L_{n + k + m + 5} \geq m \mid  \G_{n+k} \right\}
%         \geq \epsilon . \]
% (The constant $\epsilon$ depends on $m$, but we have fixed a single
% value of $m$.)   
 
We see that the distribution of $L_{n+k}$ is stochastically bounded below
by that of a Markov chain $X_n$ of the type in Lemma \ref{aprillemma}.
     Using this we see that there exists $C',\delta$ such that 
\[   \Prob\{L_{n+k} \leq n/2 \mid \G_k \} \leq  C' \, e^{-\delta n} . \] 
On the event $\Prob\{L_{n+k} \geq n/2\}$, we can use 
\eqref{apr4.0} to conclude that the conditional probability of returning
to $\p \Disk_k$ after time $\rho_{n+k}$ given $L_{n+k} \geq n/2$
is $O(e^{-n\alpha/4})$.  This completes the proof with $u = \min\{\delta,\alpha/4
\}$.

\end{proof}

\bibliography{apr11}{}
\bibliographystyle{plain}

\end{document}